\theoremstyle{plain}
\newtheorem{th-def}{Theorem-Definition}[section]
\newtheorem{theo}[th-def]{Theorem}
\newtheorem{lem}[th-def]{Lemma}
\newtheorem{prop}[th-def]{Proposition}
\newtheorem{coro}[th-def]{Corollary}
\theoremstyle{definition}
\newtheorem{defi}[th-def]{Definition}
\theoremstyle{remark}
\newtheorem{rem}{\noindent Remark}[section]
\newtheorem{rems}[rem]{\noindent Remarks}
\newtheorem{ex}{\noindent Example}
\newcommand {\junk}[1]{}
\def\N{\mathbb N}
\def\Z{\mathbb Z}
\def\R{\mathbb R}
\def\P{\mathbb P}
\def\L{\mathbb L}
\def\E{\mathbb E}
\def\A{\mathcal A}
\def\mE{\mathcal E}
\def\rma{{\mathbb R}_{\max}}
\def\mp{max-plus }
\def\G{\mathcal{G}}
\def\sAn{\left(A(n)\right)_{n\in\N}}
\def\sXn{\left(x(n,0)\right)_{n\in\N}}
\def\1{\mathbf{1}}
\title{Cycle time of stochastic max-plus linear systems.}
\author{Glenn MERLET\footnote{This article is based on my work during my PhD at Universit\'e de Rennes~1, as a JSPS postdoctoral fellow at Keio University, and as ATER at Universit\'e Paris-Dauphine. It was also supported by the ANR project MASED (06-JCJC-0069).}\\
LIAFA, CNRS-Universit\'e Paris-Diderot\\
Case 7014\\
F-75205 Paris Cedex 13 \\
E.mail: \url{glenn.merlet@gmail.com}}
\date{}
\begin{document}
\maketitle
\vfill
\begin{abstract}
We analyze the asymptotic behavior of sequences of random variables $\left(x(n)\right)_{n\in\N}$ defined by an initial condition and the induction formula $x_i(n+1)=\max_j\left(A_{ij}(n)+x_j(n)\right)$, where $\sAn$ is a stationary and ergodic sequence of random matrices with entries in \mbox{$\R\cup\{-\infty\}$}.

This type of recursive sequences are frequently used in applied probability as they model many systems as some queueing networks, train and computer networks, and production systems.

We give a necessary  condition for $\left(\frac{1}{n}x(n)\right)_{n\in\N}$ to converge almost-surely, which proves to be sufficient when the $A(n)$ are i.i.d.

Moreover, we construct a new example, in which $\sAn$ is strongly mixing, that condition is satisfied, but $\left(\frac{1}{n}x(n)\right)_{n\in\N}$ does not converge almost-surely.
\end{abstract}
\vfill

\noindent{\bf Keywords:} LLN; law of large numbers ; subadditivity ; Markov chains ; max-plus ; stochastic recursive sequences ; products of random matrices.\\

\noindent{\bf AMS-Classification:} Primary 60F15, 93C65; Secondary 60J10; 90B15; 93D209\\
\vfill\vfill
\noindent \small Submitted to EJP on November 12, 2007, final version accepted on February 12, 2008.
\url{http://www.math.washington.edu/~ejpecp/viewarticle.php?id=1781}

\newpage
\section{Introduction}
\subsection{Model}
We analyze the asymptotic behavior of the sequence of random variables $\left(x(n,x_0)\right)_{n\in\N}$ defined by:
\begin{equation}\label{eqdefx}
\left\{\begin{array}{lcl}
x(0,x_0)&=&x_0\\
x_i(n+1,x_0)&=&\max_{j} \left(A_{ij}(n)+x_j(n,x_0)\right)
\end{array} \right.,
\end{equation}
where $\sAn$ is a stationary and ergodic sequence of random matrices with entries in \mbox{$\R\cup\{-\infty\}$}.
Moreover, we assume that $A(n)$  has at least one finite entry on each row, which is a necessary and sufficient condition for $x(n,x_0)$ to be finite. (Otherwise, some coefficients can be $-\infty$.)\\

Such sequences are best understood by introducing the so-called \mp algebra, which is actually a semiring.
\begin{defi}
The \mp semiring $\rma$ is the set $\R\cup\{-\infty\}$,  with  the max as a sum (i.e. 
$a \oplus b = \max(a, b)$) and the usual sum as a product (i.e. $a\otimes b = a + b$). In this semiring, the identity elements are $-\infty$ and~$0$.
\end{defi}
We also use the matrix and vector operations induced by the semiring structure.
For matrices $A, B$ with appropriate sizes, $(A\oplus B)_{ij} = A_{ij} \oplus B_{ij} = \max(A_{ij} , B_{ij})$, $(A\otimes B)_{ij} = 
\bigoplus_k A_{ik}\otimes B_{kj} = \max_k (A_{ik} + B_{kj} )$, and for a scalar $a\in\rma$, 
$(a\otimes  A)_{ij} = a\otimes  A_{ij} = a + A_{ij}$. 
Now, Equation~(\ref{eqdefx})  $x(n+1,x_0)\otimes A(n)x(n,x_0)$. In the sequel, all products of matrices by vectors or other matrices are to be understood in this structure.

For any integer $k\ge n$, we define the product of matrices $A(k,n):=A(k) \cdots A(n)$ with entries in this semiring. Therefore, we have $x(n,x_0)=A(n-1,0) x_0$ and if the sequence has indices in $\Z$, which is possible up to a change of probability space, we define a new random vector $y(n,x_0):=A(-1,-n)x_0$, which has the same distribution as $x(n,x_0)$.\\

Sequences defined by Equation~\ref{eqdefx} model a large class of discrete event dynamical systems. This class includes some models of operations research like timed event graphs (F.~Baccelli~\cite{Baccelli}), 1-bounded Petri nets (S.~Gaubert and J.~Mairesse~\cite{GaubertMairesseIEEE}) and some queuing networks (J. Mairesse~\cite{Mairesse}, B.~Heidergott~\cite{CaractMpQueuNet}) as well as many concrete applications. Let us cite job-shops models (G.~Cohen~et~al.\cite{cohen85a}), train networks (H.~Braker~\cite{Braker}, A.~de Kort and B.~Heidergott~\cite{RailwayMpKHA}), computer networks (F.~Baccelli and D.~Hong~\cite{TCPmp}) or a statistical mechanics model (R.~Griffiths~\cite{Griffiths}). For more details about modelling, see the books by F.~Baccelli and al.~\cite{BCOQ} and by B.~Heidergott and al.~\cite{MpAtWork}.

\subsection{Law of large numbers}
The sequences satisfying Equation~(\ref{eqdefx}) have been studied in many papers. If a matrix $A$ has at least one finite entry on each row, then $x\mapsto Ax$ is non-expanding for the $L^\infty$ norm. Therefore, we can assume that $x_0$ is the 0-vector, also denoted by~$0$, and we do it from now on.

We say that $\left(x(n,0)\right)_{n\in\N}$ defined in~(\ref{eqdefx}) satisfies the strong law of large numbers if $\left(\frac{1}{n}x(n,0)\right)_{n\in\N}$ converges almost surely. When it exists, the limit in the law of large numbers is called the \emph{cycle time} of $\sAn$ or $\sXn$, and may in principle be a random variable. Therefore, we say that $\sAn$ has a cycle time rather than $\sXn$ satisfies the strong law of large numbers.

Some sufficient conditions for the existence of this cycle time were given by J.E.~Cohen~\cite{Cohen}, F.~Baccelli and Liu~\cite{BaccelliLiu,Baccelli}, Hong~\cite{Hong} and more recently by Bousch and Mairesse~\cite{BouschMairesseEng}, the author~\cite{theseGM} or Heidergott et al.~\cite{MpAtWork}. 

Bousch and Mairesse proved (Cf.~\cite{BouschMairesseEng}) that, if $A(0)0$ is integrable, then the sequence $\left(\frac{1}{n}y(n,0)\right)_{n\in\N}$ converges almost-surely and in mean and that, under stronger integrability conditions, $\left(\frac{1}{n}x(n,0)\right)_{n\in\N}$ converges almost-surely if and only if the limit of $\left(\frac{1}{n}y(n,0)\right)_{n\in\N}$ is deterministic.
The previous results can be seen as providing sufficient conditions for this to happen.
Some results only assumed ergodicity of $\sAn$, some others independence. But, even in the i.i.d. case, it was still unknown, which sequences had a cycle time and which had none.\\

\junk{
Based on the part of the result in~\cite{Baccelli} that do not rely on the additional hypotheses of that article, we give a necessary condition for the limit of $\left(\frac{1}{n}y(n,x_0)\right)_{n\in\N}$ to be deterministic. This condition involves a graph defined by the support of law of $A(1)$. Moreover, whenever it exists, the limit of $\left(\frac{1}{n}x(n,0)\right)_{n\in\N}$ is given by Formula~(\ref{eqdefx}), which was proved in~\cite{Baccelli} under several additional assumptions but unknown when the convergence was proved by other means, like in~\cite{BouschMairesseEng, Theorem~6.7}. Those results are gathered in Theorem~\ref{thCN}.

Conversely, we show (Theorem~\ref{thiid1}) that the necessary condition is sufficient when the $A(n)$ are i.i.d. As a first step, we extend (Theorem~\ref{thHong}) a result of~\cite{Hong}. Then, we perform an induction, thanks to Proposition~\ref{thgene} and Lemma~\ref{lemMifini}.}

In this paper, we solve this long standing problem.
The main result  (Theorem~\ref{thiid1}) establishes a necessary and sufficient condition for the existence of the cycle time of $\sAn$.
Moreover, we show that this condition is necessary (Theorem~\ref{thCN}) but not sufficient (Example~\ref{exIslande}) when $\sAn$ is only ergodic or mixing. Theorem~\ref{thCN} also states that the cycle time is always  given by a formula (Formula~(\ref{eqcvgcecompx})), which was proved in Baccelli~\cite{Baccelli} under several additional conditions.\\

To state the necessary and sufficient condition, we extend the notion of graph of a random matrix from the \emph{fixed support} case, that is when the entries are either almost-surely finite or almost-surely equal to $-\infty$, to the general case. The analysis of its decomposition into strongly connected components allows us to define new submatrices, which must have almost-surely at least one finite entry on each row, for the cycle time to exist.

To prove the necessity of the condition, we use the convergence results of Bousch and Mairesse~\cite{BouschMairesseEng} and a result of Baccelli~\cite{Baccelli}. To prove the converse part of Theorem~\ref{thiid1}, we perform an induction on the number of strongly connected components of the graph. The first step of the induction (Theorem~\ref{thHong}) is an extension of a result of D.~Hong~\cite{Hong}.\\

The paper is organized as follows.
In Section~\ref{SecPresRes}, we state our results and give examples to show that the hypotheses are necessary. In Section~\ref{SecProofs}, we successively prove Theorem~\ref{thCN} and Theorem~\ref{thiid1}

\section{Results}\label{SecPresRes}
\subsection{Theorems}
In this section we attach a graph to our sequence of random matrices, in order to define the necessary condition and to split the problem for the inductive proof of the converse theorem.\\

Before defining the graph, we need the following result, which directly follows from Kingman's theorem and goes back to J.E.~Cohen~\cite{Cohen}:
\begin{th-def}[Maximal Lyapunov exponent]\label{defexptop}\ \\
If $\sAn$ is an ergodic sequence of random matrices with entries in $\rma$ such that the positive part of $\max_{ij} A_{ij}(0)$ is integrable, then the sequences $\left(\frac{1}{n} \max_i x_i(n,0)\right)_{n\in\N}$ and $\left(\frac{1}{n} \max_i y_i(n,0)\right)_{n\in\N}$ converge almost-surely to the same constant $\gamma\in\rma$, which is called the maximal (or top) Lyapunov exponent of $\left(A(n)\right)_{n\in\N}$.

We denote this constant by $\gamma\left(\left(A(n)\right)_{n\in\N}\right)$, or  $\gamma(A)$.
\end{th-def}
\begin{rems}\ 
\begin{enumerate}
\item The constant $\gamma(A)$ is well-defined even if $\sAn$ has a row without finite entry.
\item The variables $\max_i x_i(n,0)$ and  $\max_i y_i(n,0)$ are equal to \mbox{$\max_{ij} A(n-1,0)_{ij}$} and $\max_{ij} A(-1,-n)_{ij}$ respectively.
\end{enumerate}
\end{rems}

Let us define the graph attached to our sequence of random matrices as well as some subgraphs. We also set the notations for the rest of the text.
\begin{defi}[Graph of a random matrix]\label{defGA}
For every $x\in\rma^{[1,\cdots,d]}$ and every subset $I\subset [1,\cdots,d]$, we define the subvector $x^I:=(x_i)_{i\in I}.$

Let $\sAn$  be a stationary sequence of random matrices with values in $\rma^{d\times d}$. 
\begin{enumerate}[i)]
\item  The graph  of $\sAn$, denoted by $\G(A)$, is the directed graph whose nodes are the integers between 1 and d and whose arcs are the pairs $(i,j)$ such that $\P(A_{ij}(0)\neq-\infty)>0$.

\item To each  \emph{strongly connected component (s.c.c)} $c$ of $\G(A)$, we attach the submatrices $A^{(c)}(n):=(A_{ij}(n))_{i,j\in c}$ and the exponent $\gamma^{(c)}:=\gamma(A^{(c)})$.

Nodes which are not in a circuit are assumed to be alone in their s.c.c Those s.c.c are called trivial and they satisfy $A^{(c)}=-\infty ~a.s.$ and therefore $\gamma^{(c)}=-\infty$.

\item A s.c.c $\tilde{c}$ is \emph{reachable} from a s.c.c $c$ (resp. from a node $i$) if $c=\tilde{c}$ (resp. $i\in c$) or if there exists a path on $\G(A)$ from a node in $c$ (resp. from $i$) to a node in $\tilde{c}$. In this case, we write $c\rightarrow \tilde{c}$. (resp. $i\rightarrow \tilde{c})$.

\item To each s.c.c. $c$, we associate the set $\{c\}$ constructed as follows. First, one finds all s.c.c. downstream of $c$ with maximal Lyapunov exponent. Let $C$ be their union. Then the set $\{ c \}$ consists of all nodes  between $ c $  and $C$:
$$\{c\}:=\left\{i\in [1,d]\left|\exists \tilde{c}, c\rightarrow i\rightarrow \tilde{c}, \gamma^{(\tilde{c})}=\max_{c \rightarrow \bar{c} } \gamma^{(\overline{c})}\right.\right\}.$$
\end{enumerate}
\end{defi}

\begin{rem}[Paths on $\G(A)$]\label{remPaths}\ 
\begin{enumerate}
\item The products of matrices satisfy the following equation:
$$A(k,k-n)_{ij}=\max_{i_0=i,i_n=j}\sum_{l=0}^{n-1}A_{i_li_{l+1}}(k-l),$$
which can be read as '$A(k,k-n)_{ij}$ is the maximum of the weights of paths from $i$ to $j$ with length $n$ on $\G(A)$, the weight of the $l^\textrm{th}$ arc being given by $A(k-l)$'. For $k=-1$, it implies that $y_i(n,0)$  is the maximum of the weights of paths on $\G(A)$ with initial node~$i$ and length $n$ but $\gamma(A)$ is not really the maximal average weight of infinite paths, because the average is a limit and maximum is taken over finite paths, before the limit over~$n$. However, Theorem~\ref{decomplyap}, due to Baccelli and Liu~\cite{Baccelli,BaccelliLiu}, shows that the maximum and the limit can be exchanged.
\item Previous author used such a graph, in the fixed support case, that is when $P(A_{ij}(0)=-\infty)\in\{0,1\}$. In that case, the (random) weights where almost surely finite. Here, we can have weights equal to~$-\infty$, but only with probability strictly less than one.
\item In the literature, the isomorphic graph with weight $A_{ji}$ on arc $(i,j)$ is often used, although only in the fixed support case. This is natural in order to multiply vectors on their left an compute $x(n,0)$. Since we mainly work with $y(n,0)$ and thus multiply matrice on their right, our definition is more convenient.
\end{enumerate}
\end{rem}

With those definitions, we can state the announced necessary condition for $\left(x(n,X_0)\right)_{n\in\N}$ to satisfy a strong law of large numbers:
\begin{theo}\label{thCN}
Let $\left(A(n)\right)_{n\in\N}$ be a stationary and ergodic sequence of random matrices  with values in $\rma^{d\times d}$ and almost-surely at least one finite entry on each row, such that the positive part of $\max_{ij} A_{ij}(0)$ is integrable.
\begin{enumerate}
\item If the limit of $\left(\frac{1}{n}y(n,0)\right)_{n\in\N}$ is deterministic, then it is given by:
\begin{equation}\label{eqcvgcecompy}
\forall i\in [1,d], \lim_n\frac{1}{n}y_i(n,0)=\max_{i\rightarrow c}\gamma^{(c)}~\mathrm{ a.s.},
\end{equation}
That being the case, for every s.c.c $c$ of $\G(A)$, the submatrix $A^{\{c\}}$ of $A(0)$ whose indices are in $\{c\}$ almost-surely has at least one finite entry on each row.
\item If $\left(\frac{1}{n}x(n,0)\right)_{n\in\N}$ converges almost-surely, then its limit is deterministic and is equal to that of $\left(\frac{1}{n}y(n,0)\right)_{n\in\N}$, that is we have:
\begin{equation}\label{eqcvgcecompx}
\forall i\in [1,d], \lim_n\frac{1}{n}x_i(n,0)=\max_{i\rightarrow c}\gamma^{(c)}~\mathrm{ a.s.},
\end{equation}
\end{enumerate}
\end{theo}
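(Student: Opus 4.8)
The plan is to derive everything from the convergence results of Bousch and Mairesse and the spectral decomposition of Baccelli and Liu (Theorem~\ref{decomplyap}), combined with a careful path analysis on the graph $\G(A)$. For part (2), the implication is essentially formal once part (1) is in hand: $x(n,0)$ and $y(n,0)$ have the same distribution, so if $\frac{1}{n}x(n,0)$ converges almost surely to some (a priori random) limit $\ell$, then $\frac{1}{n}y(n,0)$ converges in distribution to $\ell$; but by Bousch--Mairesse $\frac{1}{n}y(n,0)$ already converges almost surely (under the stated integrability of $A(0)0$, which follows from integrability of the positive part of $\max_{ij}A_{ij}(0)$ together with the finiteness-on-each-row hypothesis) to a limit whose law is thus that of $\ell$; and a standard exchangeability/tail argument forces $\ell$ to be deterministic and equal to the Bousch--Mairesse limit. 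So the whole content is in part (1).

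For part (1), the first step is to establish Formula~(\ref{eqcvgcecompy}) assuming the limit is deterministic. I would use Remark~\ref{remPaths}: $y_i(n,0)$ is the maximum over paths of length $n$ on $\G(A)$ starting at $i$ of the path weight. Any such path eventually stays in some s.c.c.\ $c$ reachable from $i$, and by Theorem~\ref{decomplyap} the per-step weight achievable inside $c$ has asymptotic rate exactly $\gamma^{(c)}$; concatenating a bounded-length connecting path costs $o(n)$. This gives $\liminf \frac{1}{n}y_i(n,0) \ge \max_{i\to c}\gamma^{(c)}$ directly and, taking the maximum over all reachable components and over the prefix/suffix split, $\limsup \frac1n y_i(n,0) \le \max_{i\to c}\gamma^{(c)}$; the matching bounds give the formula. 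Here one must be a little careful that the maximum in the definition of $\gamma^{(c)}$ as a limit can indeed be realized along an actual random path — this is precisely what Theorem~\ref{decomplyap} provides, so I would invoke it rather than reprove it.

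The heart of part (1) is the final assertion: when the limit is deterministic, each submatrix $A^{\{c\}}$ has almost surely a finite entry on every row. Fix an s.c.c.\ $c$, set $\mu := \max_{c\to\bar c}\gamma^{(\bar c)}$, and suppose for contradiction that with positive probability some row $i\in\{c\}$ of $A^{\{c\}}$ is entirely $-\infty$, i.e.\ on that event every arc of $\G(A)$ out of $i$ leaves the set $\{c\}$ (or there are none). The idea is that $\{c\}$ is exactly the ``saturated'' set of nodes lying on optimal paths realizing rate $\mu$ from $c$: by definition of $\{c\}$, a node $i\in\{c\}$ sits on some path from $c$ to a component $\tilde c$ with $\gamma^{(\tilde c)}=\mu$. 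If row $i$ of $A^{\{c\}}$ can be all $-\infty$, then an optimal path from $c$ that is forced through $i$ must, at that step, exit $\{c\}$, hence can no longer reach a rate-$\mu$ component downstream (every component strictly reachable from $i$ but outside $\{c\}$ has exponent $<\mu$ by maximality in the definition of $C$), so on this positive-probability event the realized growth rate of the relevant coordinate of $y(n,0)$ is strictly below $\mu$, while on the complementary event it equals $\mu$. That contradicts the limit being deterministic. The main obstacle — and the step I expect to require the most care — is making this dichotomy rigorous: one needs to track a specific coordinate (a node in $c$ from which $i$ is forced onto every $\mu$-optimal continuation) and show its normalized value genuinely differs on the two events, which means quantifying the ``loss'' incurred by exiting $\{c\}$ uniformly in $n$, and ruling out that some other, longer detour inside $\G(A)$ recovers rate $\mu$. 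This is where the precise combinatorial definition of $\{c\}$ (all nodes \emph{between} $c$ and the maximal downstream components) does the work, and I would spend most of the proof there, reducing to a statement about deterministic weighted graphs plus an application of Theorem~\ref{decomplyap} to each sub-s.c.c.
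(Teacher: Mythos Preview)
Your derivation of Formula~(\ref{eqcvgcecompy}) has a genuine gap: you never use the hypothesis that the limit is deterministic, and without it the formula is false. Concretely, your lower bound ``concatenate a short path to $c$, then stay in $c$ to collect rate $\gamma^{(c)}$'' would give $\liminf_n\frac{1}{n}y_i(n,0)\ge\gamma^{(c)}$ for \emph{every} s.c.c.\ $c$ reachable from $i$, regardless of whether $L$ is deterministic. But Example~\ref{exIslande} in the paper shows this is false: there $\G(A)$ is strongly connected, $\gamma(A)=\max(\gamma_1,\gamma_2)$, yet $\lim_n\frac{1}{n}y_1(n,0)$ equals the \emph{smaller} of $\gamma_1,\gamma_2$ with positive probability. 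The error is in what you credit to Theorem~\ref{decomplyap}: that theorem only says $\gamma(A)=\max_c\gamma^{(c)}$, i.e.\ it controls $\frac{1}{n}\max_i y_i(n,0)$, not individual coordinates. It does \emph{not} assert that from every node $i\in c$ one can realize a path of asymptotic weight $\gamma^{(c)}$; the arcs of $\G(A)$ carry finite weight only with positive probability, and in the merely ergodic setting the ``right'' arc may never be available from a given starting configuration. The paper's argument is different and does use determinism: if $L$ is constant and $(i,j)$ is an arc of $\G(A)$, then on the positive-probability event $\{A_{ij}(-1)\neq-\infty\}$ one has $L_i\ge L_j$; hence $L_i=\max_{j\in F_c}L_j$, and this maximum equals $\gamma(A^{[c]})=\gamma^{[c]}$ by Theorem~\ref{decomplyap} applied to $A^{[c]}$. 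Your upper bound is fine and coincides with the paper's~(\ref{eqmajor}); only the lower bound needs this route.

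Your part~(2) also has a gap. The ``standard exchangeability/tail argument'' you invoke does not exist for a sequence that is only stationary and ergodic: there is no Hewitt--Savage or Kolmogorov $0$--$1$ law available, and the limit of $\frac{1}{n}x(n,0)$ is not a tail variable of $(A(n))_{n\ge0}$ in any useful sense. Knowing that $L$ and $L'$ have the same law (which is all you get from $x(n,0)\stackrel{d}{=}y(n,0)$) does not force either to be deterministic. The paper instead uses the shift directly: from $x(n,0)=y(n,0)\circ\theta^n$ and the a.s.\ convergence of $\frac{1}{n}y(n,0)$ one obtains $\frac{1}{n}x(n,0)\circ\theta-\frac{1}{n+1}x(n+1,0)\to 0$ in probability, hence $L'\circ\theta=L'$, and ergodicity of $\theta$ makes $L'$ deterministic. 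Once the formula in part~(1) is correctly established, your argument for the ``$A^{\{c\}}$ has a finite entry on each row'' clause is essentially the paper's, and the worry about ``quantifying the loss uniformly in $n$'' disappears: one simply bounds $y_i(n,0)$ by $\max_{j\in F_c\setminus H_c}A_{ij}(-1)+\max_{j\in F_c\setminus H_c}y_j(n-1,0)\circ\theta^{-1}$ on the bad event and reads off the contradiction from~(\ref{eqcvgcecompy}).
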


To make the submatrices $A^{\{c\}}$ more concrete, we give on Fig.~\ref{FigGA} an example of a graph $\G(A)$ with the exponent~$\gamma^{(k)}$ attached to each s.c.c $c_k$ and we compute~$\{c_2\}$. The maximal Lyapunov exponent of s.c.c. downstream of~$c_2$, is~$\gamma^{(5)}$. The only s.c.c. downstream of~$c_2$ with this Lyapunov exponent is~$c_5$ and the only s.c.c. between $c_2$ and~$c_5$ is~$c_3$. Therefore, ${\{c_2\}}$ is the union of $c_2$, $c_3$ and $c_5$.

\begin{figure}[htbp]
\begin{center}
\caption{An example of computations on $\G(A)$}\label{FigGA}
\vspace*{0.2cm}
\begin{picture}(0,0)%
\includegraphics{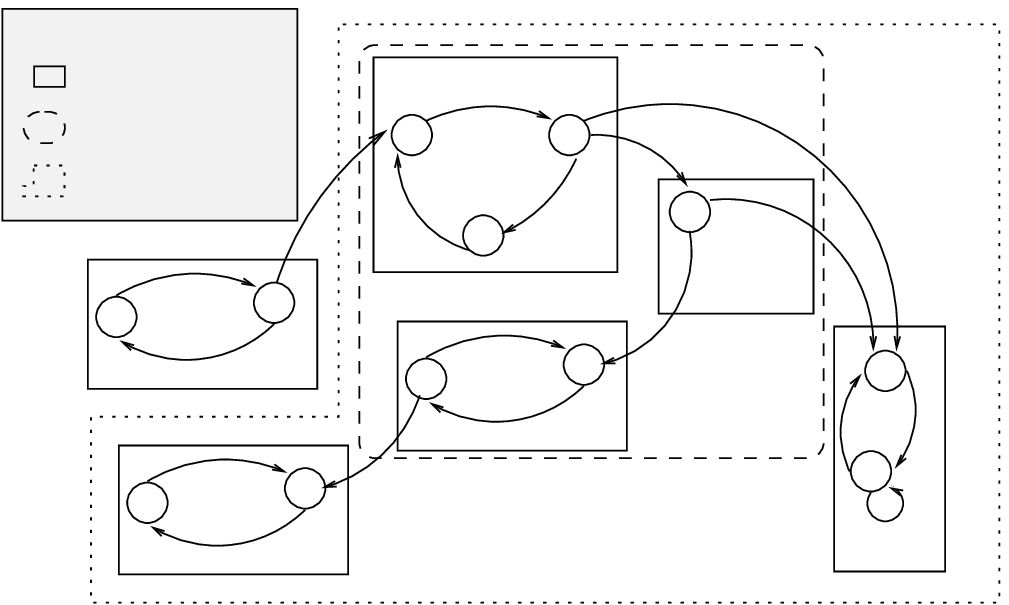}%
\end{picture}%
\setlength{\unitlength}{3947sp}%
\begingroup\makeatletter\ifx\SetFigFont\undefined%
\gdef\SetFigFont#1#2#3#4#5{%
  \reset@font\fontsize{#1}{#2pt}%
  \fontfamily{#3}\fontseries{#4}\fontshape{#5}%
  \selectfont}%
\fi\endgroup%
\begin{picture}(4809,2874)(-686,-4423)
\put(3578,-4194){\makebox(0,0)[b]{\smash{{\SetFigFont{8}{9.6}{\rmdefault}{\mddefault}{\updefault}{\color[rgb]{0,0,0}$\gamma^{(4)}=2$}%
}}}}
\put(557,-3339){\makebox(0,0)[b]{\smash{{\SetFigFont{8}{9.6}{\rmdefault}{\mddefault}{\updefault}{\color[rgb]{0,0,0}$\gamma^{(1)}=4$}%
}}}}
\put(706,-4231){\makebox(0,0)[b]{\smash{{\SetFigFont{8}{9.6}{\rmdefault}{\mddefault}{\updefault}{\color[rgb]{0,0,0}$\gamma^{(6)}=0$}%
}}}}
\put(2041,-3617){\makebox(0,0)[b]{\smash{{\SetFigFont{8}{9.6}{\rmdefault}{\mddefault}{\updefault}{\color[rgb]{0,0,0}$\gamma^{(5)}=3$}%
}}}}
\put(-524,-1711){\makebox(0,0)[lb]{\smash{{\SetFigFont{8}{9.6}{\rmdefault}{\mddefault}{\updefault}{\color[rgb]{0,0,0}Legend}%
}}}}
\put(-299,-1936){\makebox(0,0)[lb]{\smash{{\SetFigFont{8}{9.6}{\rmdefault}{\mddefault}{\updefault}{\color[rgb]{0,0,0}: $c$}%
}}}}
\put(-299,-2161){\makebox(0,0)[lb]{\smash{{\SetFigFont{8}{9.6}{\rmdefault}{\mddefault}{\updefault}{\color[rgb]{0,0,0}: \tiny ${\{c_2\}}$}%
}}}}
\put(-299,-2461){\makebox(0,0)[lb]{\smash{{\SetFigFont{8}{9.6}{\rmdefault}{\mddefault}{\updefault}{\color[rgb]{0,0,0}: $\bigcup_{c_2\rightarrow c}c$}%
}}}}
\put(1764,-2760){\makebox(0,0)[lb]{\smash{{\SetFigFont{8}{9.6}{\rmdefault}{\mddefault}{\updefault}{\color[rgb]{0,0,0}$\gamma^{(2)}=1$}%
}}}}
\put(2679,-2971){\makebox(0,0)[lb]{\smash{{\SetFigFont{8}{9.6}{\rmdefault}{\mddefault}{\updefault}{\color[rgb]{0,0,0}$\hspace*{-1mm}\gamma^{(3)}\hspace*{-1mm}=\hspace*{-1mm}-\infty$}%
}}}}
\end{picture}%

\end{center}
\end{figure}

The necessary and sufficient condition in the i.i.d. case reads
\begin{theo}[Independent case]\label{thiid1}
If $\left(A(n)\right)_{n\in\N}$ is a sequence of i.i.d. random matrices with values in $\rma^{d\times d}$ and almost-surely at least one finite entry on each row, such that $\max_{A_{ij}(0)\neq -\infty}|A_{ij}(0)|$ is integrable, then the sequence $\left(\frac{1}{n}x(n,0)\right)$ converges almost-surely
if and only if for every s.c.c $c$, the submatrix $A^{\{c\}}$ of $A(0)$ defined in Theorem~\ref{thCN} almost-surely has at least one finite entry on each row.
That being the case the limit is given by Equation~(\ref{eqcvgcecompx}).
\end{theo}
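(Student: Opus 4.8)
The necessity of the condition is already covered by Theorem~\ref{thCN} (part~1, together with part~2 and the Bousch--Mairesse dichotomy: almost-sure convergence of $\frac1n x(n,0)$ forces the limit of $\frac1n y(n,0)$ to be deterministic, hence each $A^{\{c\}}$ has a finite entry on each row). So the plan is to prove sufficiency: assuming every $A^{\{c\}}$ has almost-surely at least one finite entry on each row, show that $\frac1n x(n,0)$ converges almost-surely, necessarily to the constant given by~(\ref{eqcvgcecompx}). By the Bousch--Mairesse result quoted in the introduction, under the integrability hypothesis it suffices to prove that the (already existing) almost-sure limit of $\frac1n y(n,0)$ is \emph{deterministic}, and then identify it via~(\ref{eqcvgcecompy}).

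The argument goes by induction on the number of strongly connected components of $\G(A)$. The base case is the situation where $\G(A)$ has a single s.c.c., or more precisely where all relevant mass sits in one component: this is exactly Theorem~\ref{thHong}, the announced extension of Hong's result, which gives a deterministic cycle time when the matrix is (essentially) irreducible. For the inductive step I would exploit the block-triangular structure induced by the decomposition of $\G(A)$ into s.c.c.'s. Order the components so that $c \to \tilde c$ implies $\tilde c$ comes later; then $y_i(n,0)$, being the max-weight of length-$n$ paths out of $i$, decomposes according to which downstream component the optimal path eventually stays in. Pick a source component $c$ (one that is not reachable from any other nontrivial component). The coordinates of $y$ indexed outside the set of nodes reachable from $c$ evolve autonomously via a strictly smaller graph, so the induction hypothesis applies to them. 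The coordinates indexed by nodes reachable from $c$ are driven by those, plus the internal dynamics $A^{(c)}$; the key point is that a path from such a node splits as a (possibly empty) sojourn governed by $A^{(c)}$ followed by a jump into the already-controlled part, and one shows the asymptotically dominant behaviour is $\max_{i\to \bar c}\gamma^{(\bar c)}$. This is where the hypothesis on $A^{\{c\}}$ enters: it guarantees that the paths realizing the dominant exponent actually exist with probability one (no row of $A^{\{c\}}$ is entirely $-\infty$), so the max-weight does not drop to a random smaller value or to $-\infty$ on a positive-probability event. Proposition~\ref{thgene} and Lemma~\ref{lemMifini} are presumably the technical devices that make this splitting rigorous and handle the bookkeeping of which submatrices retain a finite entry per row.

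The main obstacle is the inductive step, specifically controlling the interaction between a source component $c$ and the downstream part when $\gamma^{(c)}$ is \emph{not} the maximal downstream exponent. In that regime the contribution of the internal $A^{(c)}$-dynamics is asymptotically negligible compared with the weight accumulated after jumping downstream, but one must show this uniformly and almost surely, and in particular rule out that a rare excursion staying inside $c$ for a long time before jumping produces a random correction of order $n$. This is precisely what the hypothesis ``$A^{\{c\}}$ has a finite entry on each row'' is designed to prevent, and the delicate part is to convert that qualitative statement into the quantitative estimate that the transit time through $\{c\}$ is $o(n)$ in the relevant sense, using i.i.d.-ness (e.g. a renewal / regeneration argument) and the integrability of $\max_{A_{ij}(0)\neq-\infty}|A_{ij}(0)|$. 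Once the limit of $\frac1n y(n,0)$ is shown deterministic and equal to~(\ref{eqcvgcecompy}), Theorem~\ref{thCN}(2) and the Bousch--Mairesse equivalence upgrade this to almost-sure convergence of $\frac1n x(n,0)$ with limit~(\ref{eqcvgcecompx}), completing the proof.
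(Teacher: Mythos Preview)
Your overall strategy matches the paper's: necessity from Theorem~\ref{thCN}, sufficiency by an induction over the s.c.c.\ structure with Theorem~\ref{thHong} as base case and Proposition~\ref{thgene} together with Lemma~\ref{lemMifini} for the inductive step. However, your description of the inductive decomposition is backwards, and this is a genuine error.

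You pick a source component $c$ (no other component reaches $c$) and claim that the coordinates indexed by $[1,d]\setminus F_c$ evolve autonomously, while those in $F_c$ are driven by them. This is the wrong way round. A node $j\notin F_c$ can perfectly well have an arc $j\to k$ with $k\in F_c$ (they share descendants without $j$ reaching~$c$), so $y_j$ depends on $y_k$. What \emph{is} autonomous is $F_c$ itself: if $i\in F_c$ and $i\to k$ then $c\to i\to k$ forces $k\in F_c$. This is exactly Proposition~\ref{propGiA}~i).

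The paper does not peel a source off the whole graph. Instead, for each s.c.c.~$c$ it proves $\lim_n \tfrac1n y^{\{c\}}(n,0)=\gamma^{[c]}\1$ by induction on~$|G_c|$. The base case $|G_c|=1$ means $c$ is dominating, and Theorem~\ref{thHong} applied to the i.i.d.\ sequence $A^{(c)}(n)$ gives the result (this is Hypothesis~2 of Proposition~\ref{thgene}). For the step one takes $I=c$, $J=H_c\setminus c$; each $\tilde c\subset J$ lies in $G_c\setminus\{c\}$, so $|G_{\tilde c}|<|G_c|$ and $\gamma^{[\tilde c]}=\gamma^{[c]}$, and the inductive hypothesis controls $(y^{\{c\}})^J$. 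Lemma~\ref{lemMifini} (whose hypothesis that the pattern set $\mE$ be a semigroup holds precisely because the $A(n)$ are i.i.d.) then guarantees, almost surely, a finite-weight path from any $i\in I$ into~$J$; this immediately gives $\liminf_n\tfrac1n y_i^{\{c\}}(n,0)\ge\gamma^{[c]}$. Your worry about showing ``transit time through $\{c\}$ is $o(n)$'' is unnecessary: a single finite jump into $J$ at some a.s.\ finite random time suffices, since the contribution of that prefix is $O(1)$.

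One further difference: you pass from $y$ to $x$ via the Bousch--Mairesse equivalence. The paper instead applies Vincent's result (Corollary~\ref{lempassgauche}) to each $A^{\{c\}}$, using that on $H_c$ all Lyapunov exponents coincide with~$\gamma^{[c]}$, so $\gamma(A^{\{c\}})=\gamma_b(A^{\{c\}})$. This is where the integrability of $A^{\{c\}}(0)0$ (implied by the hypothesis on $\max_{A_{ij}(0)\neq-\infty}|A_{ij}(0)|$) enters. Either route is valid.
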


\begin{rem}\label{remThiid}
We also prove that, when $A(0)0\in\L^1$, the limit of  $\left(\frac{1}{n}y(n,0)\right)$ is deterministic if and only if the matrices $A^{\{c\}}$ almost-surely have at least one finite entry on each row.

The stronger integrability ensures the convergence of $\left(\frac{1}{n}x(n,0)\right)$ to this limit, like in~\cite[Theorem~6.18]{BouschMairesseEng}.
There, it appeared as the specialization of a general condition for uniformly topical operators, whereas in this paper it ensures that $B0$ is integrable for every submatrix $B$ of $A(0)$ with at least one finite entry on each row.

Actually, we prove that $\left(\frac{1}{n}x(n,0)\right)$ converges, provided that $\forall c, A^{\{c\}}0\in\L^1$, (see Proposition~\ref{thgene}). We chose to give a slightly stronger integrability condition, which is easier to check because it does not depend on $\G(A)$.
\end{rem}

\subsection{Examples}
To end this section, below are three examples that show that the independence is necessary but not sufficient to ensure the strong law of large numbers and that the integrability condition is necessary. We will denote by $x^\top$ the transpose of a vector~$x$.
\begin{ex}[Independence is necessary]\label{exIslande}
Let $A$ and $B$ be defined by
$$A=\left(\begin{array}{cc}1&-\infty\\-\infty&0\end{array}\right) 
\textrm{ and } 
B=\left(\begin{array}{cc}-\infty&0\\ 0&-\infty\end{array}\right).$$
For any positive numbers $\gamma_1$ and $\gamma_2$ such that $\gamma_1+\gamma_2<1$, we set $\delta=\frac{1-\gamma_1-\gamma_2}{2}$. Let $(A(n),i_n)_{n\in\N}$ be a stationnary version of the irreducible Markov chain on $\{A,B\}\times\{1,2\}$ with transition probabilities given by the diagram of Figure~\ref{FigCM}:
\begin{figure}[htbp]
\begin{center}
\caption{Transition probabilities of $\left(A(n),i_n\right)_{n\in\N}$}\label{FigCM}
\begin{VCPicture}{(0,2)(12,5)}
\tiny
\State[$A,1$]{(3,3)}{1}
\State[$B,2$]{(6,4)}{2}
\State[$A,2$]{(9,3)}{3}
\State[$B,1$]{(6,2)}{4}
\tiny
\ArcL{1}{2}{\delta}
\ArcL{2}{3}{$$\gamma_2$$}
\ArcL[0.5]{2}{4}{$$1-\gamma_2$$}
\ArcL{3}{4}{$$\delta$$}
\ArcL{4}{1}{$$\gamma_1$$}
\ArcL[0.5]{4}{2}{$$1-\gamma_1$$}
\LoopW{1}{$$1-\delta$$}
\LoopE{3}{$$1-\delta$$}
\end{VCPicture}
\end{center}
\end{figure}

Then, $\sAn$ is a strongly mixing sequence of matrices, which means that it satisfies
$$\E\left[f\left(A(0)\right)g\left(A(n)\right)\right]\rightarrow \E\left[f\left(A(0)\right)\right]\E\left[f\left(A(0)\right)\right]$$
 for any integrable functions $f$and $g$ on $\rma^{d\times d}$. Moreover, its support is the full shift $\{A,B\}^\N$, but we have
\begin{equation}\label{eqEx}
\P\left(\lim_n\frac{1}{n}y_1(n,0)=\gamma_1\right)= \gamma_1+\delta \textrm{ and } \P\left(\lim_n\frac{1}{n}y_1(n,0)=\gamma_2\right)= \gamma_2+\delta,
\end{equation}
and thus, according to Theorem~\ref{thCN}, $\left(\frac{1}{n}x(n,0)\right)_{n\in\N}$ does not converge.
Finally, even if $\sAn$ is a quickly mixing sequence, which means that it is in some sense close to i.i.d.\,, and $\G(A)$ is strongly connected, does $\sAn$ fail to have a cycle time.

To prove Equation~(\ref{eqEx}), let us denote by $\tau$ the permutation between $1$ and $2$ and by $g(C,i)$ the only finite entry on the $i^\mathrm{th}$ row of~$C$. It means that for any~$i$, $g(A,i)=A_{ii}$ and $g(B,i)=B_{i\tau(i)}$. Since all arcs of the diagram arriving to a node $(A,i)$ are coming from a node $(C,i)$, while those arriving at a node $(B,i)$ are coming from a node $(C,\tau(i))$, we almost surely have
$$x_{i_n}(n+1,0)-x_{i_{n-1}}(n,0)=g(A(n),i_n)) \textrm{ and } x_{\tau(i_n)}(n+1,0)-x_{\tau(i_{n-1})}(n,0)=g(A(n),\tau(i_n))),$$
and thus
\begin{eqnarray*}
x_{i_{n-1}}(n,0)=\sum_{k=0}^{n-1}g(A(k),i_k)&\textrm{ and }&
x_{\tau(i_{n-1})}(n,0)=\sum_{k=0}^{n-1}g(A(k),\tau(i_k)),\\
y_{i_{-1}}(n,0)=\sum_{k=1}^ng(A(-k),i_{-k})&\textrm{ and }&
y_{\tau(i_{-1})}(n,0)=\sum_{k=1}^ng(A(-k),\tau(i_{-k})).
\end{eqnarray*}

It is easily checked that the invariant distribution of the Markov chain is given by the following table:
\begin{center}
\begin{tabular}{|c||c|c|c|c|}
\hline
$x$		& $(A,1)$  & $(B,2)$  & $(A,2)$   & $(B,1)$ \\
\hline 
$\P((A(n),i_n)=x)$ & $\gamma_1$ & $\delta$ & $\gamma_2$ & $\delta$ \\
\hline 
\end{tabular}
\end{center}
and that $g$ is equal to $0$ except in $(A,1)$.

Therefore, we have 
\begin{eqnarray*}
\lim_n\frac{1}{n}y_{i_{-1}}(n,0)&=&\E\left(g(A(0),i_0)\right)=\P\left((A(0),i_0)=(A,1)\right)=\gamma_1\\
\lim_n\frac{1}{n}y_{\tau(i_{-1})}(n,0)&=&\E\left(g(A(0),\tau(i_0))\right)=\P\left((A(0),\tau(i_0))=(A,2)\right)=\gamma_2
\end{eqnarray*}

and consequently
$$\lim_n\frac{1}{n}y(n,0)=(\gamma_{i_{-1}}, \gamma_{\tau(i_{-1})})^\top a.s.$$
which implies Equation~(\ref{eqEx}).
\end{ex}

The next example, due to Bousch and Mairesse shows that the cycle time may not exist, even if the $A(n)$ are i.i.d.
\begin{ex}[Bousch and Mairesse, Independence is not sufficient]\label{exmairesse}
Let $\left(A(n)\right)_{n \in\N}$ be the  sequence of i.i.d. random variables taking values
$$B=\left(\begin{array}{ccc}0&-\infty&-\infty\\ 0&-\infty&-\infty\\ 0&1&1\end{array}\right) 
\textrm{ and } 
C=\left(\begin{array}{ccc}0&-\infty&-\infty\\ 0&-\infty&0\\ 0&0&-\infty\end{array}\right)$$
with probabilities $p>0$ and $1-p>0$.
Let us compute the action of $B$ and $C$ on vectors of type $(0,x,y)^\top$, with $x,y\ge0$:
$$B(0,x,y)^\top=(0,0,\max(x,y)+1)^\top\textrm{ and } C (0,x,y)^\top=(0,y,x)^\top.$$
Therefore $x_1(n,0)=0$ and $\max_ix_i(n+1,0)=\#\{0\le k\le n|A(k)=B\}$. In particular, if 
$A(n)=B$, then $x(n+1,0)=\left(0,0,\#\{0\le k\le n|A(k)=B\}\right)^\top$, and if $A(n)=C$ and $A(n-1)=B$, then $x(n+1,0)=\left(0,\#\{0\le k\le n|A(k)=B\},0\right)^\top$. Since $\left(\frac{1}{n}\#\{0\le k\le n|A(k)=B\}\right)_{n\in\N}$ converges almost-surely to $p$, we arrive at:
\begin{equation}\label{eqexmairesse}
\begin{array}{c}
\lim_n \frac{1}{n} x_1(n,0)=0~\mathrm{ a.s.}\\
\forall i\in\{2,3\}, \liminf_n \frac{1}{n} x_i(n,0)=0\textrm{ and }\limsup_n \frac{1}{n} x_i(n,0)=p\textrm{  a.s.}
\end{array}
\end{equation}
Therefore the sequence $\left(\frac{1}{n}x(n,0)\right)_{n\in\N}$ almost-surely does not converge.\\

We notice that $\G(A)$ has two s.c.c $c_1=\{1\}$ and $c_2=\{2,3\}$, with Lyapunov exponents $\gamma^{(c_1)}=0$ and $\gamma^{(c_2)}=p$, and $2\rightarrow 1$. Therefore, we check that the first row of $A^{\{c_2\}}$ has no finite entry with probability $p$.

\end{ex}

Theorem~\ref{thiid1} gives a necessary and sufficient condition for the existence of the cycle time of an i.i.d. sequence of matrices~$A(n)$ such that $\max_{A_{ij}(0)\neq -\infty}|A_{ij}(0)|$ is integrable.
But the limit of $\left(\frac{1}{n}y(n,0)\right)_{n\in\N}$ exists  as soon as $A(0)0$ is integrable. Thus, it would be natural to expect Theorem~\ref{thiid1} to hold under this weaker integrability assumption. However, it does not, as the example below shows.
\begin{ex}[Integrability]\label{exinteg}
Let $\left(X_n\right)_{n\in\Z}$ be an i.i.d. sequence of real variables satisfying $X_n\ge 1\textrm{ a.s.}$ and $\E(X_n)=+\infty.$ The sequence of matrices is defined by:
$$A(n)=\left(\begin{array}{ccc}
-X_n&-X_n&0\\
-\infty&0&0\\
-\infty&-\infty&-1
\end{array}\right)$$
A straightforward computation shows that $x(n,0)$ is $\left(\max(-X_n,-n),0,-n\right)^\top$ and $y(n,0)=\left(\max(-X_0,-n),0,-n\right)^\top$. It follows from Borel-Cantelli lemma that $\lim_n\frac{1}{n}X_n=0 \textrm{ a.s.}$ if and only if $\E(X_n)<\infty$. Hence $\left(\frac{1}{n}x(n,0)\right)_{n\in\N}$ converges to $(0,0,-1)^\top$ in probability but the convergence does not occur almost-surely.\\

Let us notice that the limit of $\left(\frac{1}{n}y(n,0)\right)_{n\in\N}$ is given by Remark~\ref{remThiid}: each s.c.c has exactly one node, $\gamma^{(1)}=-\E(X_n)=-\infty$, $\gamma^{(2)}=0$ and $\gamma^{(3)}=-1$.
\end{ex}

\section{Proofs}\label{SecProofs}
\subsection{Necessary conditions}
\subsubsection{Additional notations}
To interpret the results in terms of paths on $\G(A)$, and prove them, we redefine the $A^{\{c\}}$ and some intermediate submatrices.
\begin{defi}\label{defGiA}
To each s.c.c $c$, we attach three sets of elements.
\begin{enumerate}[i)]
\item Those that only depend on $c$ itself.
$$x^{(c)}(n,x_0):=A^{(c)}(n-1,0)(x_0)^{c}
 \textrm{ and } y^{(c)}(n,x_0):=A^{(c)}(-1,-n)(x_0)^{c} $$
\item Those that depend on the graph downstream of~$c$.
$$E_c:=\{\tilde{c}|c\rightarrow \tilde{c}\}
\textrm{, }
\gamma^{[c]}:=\max_{\tilde{c}\in E_c}\gamma^{(\tilde{c})}
\textrm{, }$$
$$F_c:=\bigcup_{\tilde{c}\in E_c}\tilde{c}
\textrm{, }
A^{[c]}(n):=\left(A_{ij}(n)\right)_{i,j\in F_c}$$
$$x^{[c]}(n,x_0):=A^{[c]}(n-1,0)(x_0)^{F_c} \textrm{ and }
y^{[c]}(n,x_0):=A^{[c]}(-1,-n)(x_0)^{F_c}.$$

\item Those that depend on ${\{c\}}$, as defined in Definition~\ref{defGA}.
$$G_c:=\{\tilde{c}\in E_c|\exists \hat{c}, c\rightarrow \tilde{c}\rightarrow \hat{c}, \gamma^{(\hat{c})}=\gamma^{[c]}\},$$
$$H_c:=\bigcup_{\tilde{c}\in G_c}\tilde{c}\textrm{ , }  A^{\{c\}}(n):=\left(A_{ij}(n)\right)_{i,j\in H_c} $$
$$x^{\{c\}}(n,x_0):=A^{\{c\}}(n-1,0)(x_0)^{H_c}\textrm{ and } y^{\{c\}}(n,x_0):=A^{\{c\}}(-1,-n)(x_0)^{H_c}.$$
\item A s.c.c $c$ is called \emph{dominating} if $G_c=\{c\}$, that is if for every $\tilde{c}\in E_c\backslash\{c\}$, we have: 
$\gamma^{(c)}>\gamma^{(\tilde{c})}.$
\end{enumerate}
\end{defi}

With those notations, the $\{c\}$ of Definition~\ref{defGA} is denoted by~$H_c$, while $A^{\{c\}}$ is~$A^{\{c\}}(0)$.

As in Remark~\ref{remPaths}, we notice that the coefficients $y^{(c)}_i(n,0)$, $y^{[c]}_i(n,0)$ and $y^{\{c\}}_i(n,0)$ are the maximum of the weights of paths on the subgraph of $\G(A)$ with nodes in $c$, $F_c$ and $H_c$ respectively.

Consequently $\gamma^{(c)}$, $\gamma(A^{[c]})$ and $\gamma(A^{\{c\}})$ are the maximal average weight of infinite paths on $c$, $F_c$ and $G_c$ respectively. Since $\gamma^{[c]}$ is the maximum of the $\gamma^{(\tilde{c})}$ for s.c.c $\tilde{c}$ downstream of~$c$, the interpretation suggests it might be equal to $\gamma(A^{[c]})$ and $\gamma(A^{\{c\}})$. That this is indeed true has been shown by F.~Baccelli~\cite{Baccelli}.

Clearly, $\gamma(A^{[c]})\ge \gamma(A^{\{c\}})\ge \gamma(A^{[c]})$, but the maximum is actually taken for finite paths, so that the converse inequalities are not obvious. 

\subsubsection{Formula for the limit}
 Up to a change of probability space, we can assume that $A(n)=A\circ\theta^n$, where $A$ is a random variable  and $(\Omega,\theta,\P)$ is an invertible ergodic  measurable dynamical system. We do it from now on.

Let $L$ be the limit of $\left(\frac{1}{n}y(n,0)\right)_{n\in\N}$, which exists according to~\cite[Theorem~6.7]{BouschMairesseEng} and is assumed to be deterministic.

By definition of $\G(A)$, if $(i,j)$ is an arc of $\G(A)$, then, with positive probability, we have $A_{ij}(-1)\neq -\infty$ and
 $$L_i=\lim_n \frac{1}{n}y_i(n,0)\ge \lim_n \frac{1}{n} (A_{ij}(-1)+y_j(n,0)\circ\theta^{-1})=0+ L_j\circ\theta^{-1}=L_j.$$

If $c \rightarrow \tilde{c}$, then for every $i\in c$ and $j\in \tilde{c}$, there exists a path on $\G(A)$ from $i$ to $j$, therefore $L_i\ge L_j$. Since this holds for every $j\in F_c$, we have:
\begin{equation}\label{eqlim}
L_i =\max_{j\in F_c} L_j
\end{equation}
To show that $\max_{j\in F_c} L_j=\gamma^{[c]}$, we have to study the Lyapunov exponents of sub-matrices.

The following proposition states some easy consequences of Definition~\ref{defGiA} which will be useful in the sequel.
\begin{prop}\label{propGiA}
The notations are those of Definition~\ref{defGiA}
\begin{enumerate}[i)]
\item For every s.c.c. $c$, $x^{[c]}(n,x_0)=x^{F_c}(n,x_0)$.
\item For every  s.c.c. $m$,  and every $i\in c$, we have:
$$x_i(n,0)= x_i^{[c]}(n,0)\ge x_i^{\{c\}}(n,0)\ge x_i^{(c)}(n,0).$$
\begin{equation}\label{OrdreDesy}
y_i(n,0)= y_i^{[c]}(n,0)\ge y_i^{\{c\}}(n,0)\ge y_i^{(c)}(n,0).
\end{equation}
\junk{$$\max_{i_l\in [1,d]}\sum_{l=0}^{n-1}A_{i_{l+1}i_l}(l) =\max_{i_l\in F_c}\sum_{l=0}^{n-1}A_{i_{l+1}i_l}(l) \ge \max_{i_l\in H_c}\sum_{l=0}^{n-1}A_{i_{l+1}i_l}(l) \ge \max_{i_l\in c}\sum_{l=0}^{n-1}A_{i_{l+1}i_l}(l).$$}
\item Relation $\rightarrow$ is a partial order, for both the nodes and the s.c.c.
\item If $A(0)$ has almost-surely at least one finite entry on each row, then for every s.c.c. $c$, $A^{[c]}(0)$ has almost-surely has least one finite entry on each row.
\item For every $\tilde{c}\in E_c$, we have $\gamma^{(\tilde{c})}\le\gamma^{[\tilde{c}]}\le\gamma^{[c]}$ and \mbox{$G_c=\{\tilde{c}\in E_c|\gamma^{[\tilde{c}]}=\gamma^{[c]}\}.$}
\end{enumerate} 
\end{prop}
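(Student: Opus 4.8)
The plan is to peel off two purely combinatorial facts about $\G(A)$ and then let each of the five items fall out of them together with the path interpretation of Remark~\ref{remPaths}. The first fact is that $F_c$ is \emph{closed under arcs}: if $i\in F_c$ and $(i,j)$ is an arc of $\G(A)$, then $j\in F_c$. Indeed $i\in\tilde c$ for some $\tilde c$ with $c\rightarrow\tilde c$, and $i\rightarrow j$, so $c\rightarrow j$ and the s.c.c.\ of $j$ lies in $E_c$, i.e.\ $j\in F_c$. The second fact is the chain of inclusions $c\subseteq H_c\subseteq F_c$: the middle set is $\bigcup_{\tilde c\in G_c}\tilde c$ with $G_c\subseteq E_c$, and $c\in G_c$ because the finite maximum $\gamma^{[c]}=\max_{\tilde c\in E_c}\gamma^{(\tilde c)}$ is attained at some $\hat c\in E_c$, which witnesses $c\rightarrow c\rightarrow\hat c$ with $\gamma^{(\hat c)}=\gamma^{[c]}$.

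Granting these, I would handle (i) and (ii) together. By Remark~\ref{remPaths} (and the comment after Definition~\ref{defGiA}), a.s.\ $x_i(n,0)$ is the maximum weight of a length-$n$ path of $\G(A)$ issued from $i$, while $x_i^{[c]}(n,0)$, $x_i^{\{c\}}(n,0)$, $x_i^{(c)}(n,0)$ are the analogous maxima over paths constrained to stay in $F_c$, $H_c$, $c$ respectively (the graph of a submatrix being the induced subgraph), and likewise for $y$. For $i\in F_c$ every path of $\G(A)$ issued from $i$ stays in $F_c$ by the closure fact, so the constraint "stay in $F_c$" is vacuous and $x^{[c]}(n,x_0)=x^{F_c}(n,x_0)$, which is (i) together with its verbatim $y$-analogue. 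Restricting to $i\in c\subseteq F_c$ yields the two equalities of (ii); the two inequalities are then immediate, since for $i\in c$ the paths staying in $c$ form a subset of the paths staying in $H_c$, a subset of the paths staying in $F_c$, so the corresponding maxima are ordered. (Alternatively one proves (i) and the inequalities of (ii) by a one-line induction on $n$: restricting the max-plus recursion to $I\subseteq J$ with $i\in I$ dominates it coordinatewise on $I$.)

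Item (iii) is bookkeeping on $\rightarrow$: reflexivity is the convention, transitivity is concatenation of paths, and antisymmetry holds because $c\rightarrow\tilde c\rightarrow c$ with $c\neq\tilde c$ would force a circuit meeting both, contradicting maximality of strongly connected components (the trivial-s.c.c.\ case, where the circuit would reduce to a loop at a node lying on no circuit, being ruled out separately). For (iv), fix $i\in F_c$; by definition of $\G(A)$, $\P(A_{ij}(0)\neq-\infty)=0$ whenever $(i,j)$ is not an arc, so the event $\bigcup_{j\notin F_c}\{A_{ij}(0)\neq-\infty\}$ is null (by closure of $F_c$, every arc out of $i$ ends in $F_c$); intersecting its complement with the hypothesis that row $i$ of $A(0)$ a.s.\ has a finite entry shows that a.s.\ some $A_{ij}(0)$ with $j\in F_c$ is finite, i.e.\ row $i$ of $A^{[c]}(0)$ has a finite entry. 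Finally, for (v): if $\tilde c\in E_c$ then transitivity gives $E_{\tilde c}\subseteq E_c$, hence $\gamma^{(\tilde c)}\le\gamma^{[\tilde c]}=\max_{\hat c\in E_{\tilde c}}\gamma^{(\hat c)}\le\max_{\hat c\in E_c}\gamma^{(\hat c)}=\gamma^{[c]}$; and since $E_{\tilde c}$ is finite, the defining condition "$\exists\hat c\in E_{\tilde c}$ with $\gamma^{(\hat c)}=\gamma^{[c]}$" for $\tilde c\in G_c$ is equivalent to $\max_{\hat c\in E_{\tilde c}}\gamma^{(\hat c)}=\gamma^{[c]}$, i.e.\ to $\gamma^{[\tilde c]}=\gamma^{[c]}$.

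I do not expect a genuine obstacle: each item is a short transitivity argument or a one-line induction. The two places deserving a moment's care are (iv), where the finite entry of row $i$ may depend on the sample point, so one must argue through the null event $\bigcup_{j\notin F_c}\{A_{ij}(0)\neq-\infty\}$ rather than fixing a single column, and (ii), where one must keep in mind that the equalities genuinely use the closure of $F_c$ under arcs whereas the inequalities do not.
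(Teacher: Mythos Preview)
Your proof is correct. The paper does not prove this proposition at all: it introduces it with the sentence ``The following proposition states some easy consequences of Definition~\ref{defGiA} which will be useful in the sequel'' and immediately moves on to Theorem~\ref{decomplyap}. Your argument supplies exactly the details the paper suppresses --- the closure of $F_c$ under outgoing arcs, the inclusions $c\subseteq H_c\subseteq F_c$, and the path interpretation --- and these are indeed the natural ones. One minor remark: the paper's phrasing of (iii), ``for both the nodes and the s.c.c.'', is itself imprecise (reachability on nodes is only a preorder, becoming a partial order after quotienting by the s.c.c.\ equivalence); your treatment of the s.c.c.\ case is what is actually needed later in the paper, so nothing is missing.
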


The next result is about Lyapunov exponents. It is already in~\cite{Baccelli,BaccelliLiu} and its proof does not uses the additional hypotheses of those articles. For a point by point checking, see~\cite{theseGM}.
\begin{theo}[F.~Baccelli and Z.~Liu~\cite{Baccelli, BaccelliLiu, BCOQ}]\label{decomplyap}
If $\sAn$ is a stationary and ergodic sequence of random matrices with values in $\rma^{d\times d}$ such that the positive part of $\max_{i,j} A_{ij}$ is integrable, then $\gamma(A)=\max_{c}\gamma^{(c)}$.
\end{theo}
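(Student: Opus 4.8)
The plan is an induction on the number $p$ of strongly connected components of $\G(A)$. The inequality $\gamma(A)\ge\max_c\gamma^{(c)}$ needs no induction: for a s.c.c.\ $c$ and $i\in c$, a path that never leaves $c$ is in particular a path on $\G(A)$, so $y_i(n,0)\ge y_i^{(c)}(n,0)$ — the last inequality in~\eqref{OrdreDesy} — and dividing by $n$ and applying Theorem-Definition~\ref{defexptop} to $A^{(c)}$ (whose entries are among those of $A$, so the integrability hypothesis is inherited) gives $\gamma(A)\ge\gamma^{(c)}$; a trivial $c$ contributes $\gamma^{(c)}=-\infty$ and is vacuous. So the real content is the reverse bound $\limsup_n\tfrac1n\max_i y_i(n,0)\le\gamma^*:=\max_c\gamma^{(c)}$ a.s.

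If $p=1$, then $\G(A)$ is strongly connected, its only s.c.c.\ is $[1,d]$, $A^{(c)}=A$, and there is nothing to prove. For $p\ge2$ I would pick a s.c.c.\ $c_0$ that is a source for $\to$ (nothing else points to it; one exists since the condensation of $\G(A)$ is a finite DAG) and set $V':=[1,d]\setminus c_0$. Deleting $c_0$ from $\G(A)$ yields $\G(A^{V'})$, whose s.c.c.\ are exactly those of $\G(A)$ other than $c_0$, with the same component exponents, so $A^{V'}$ has $p-1$ components. Since $c_0$ is a source, no path on $\G(A)$ can enter $c_0$ from outside (Remark~\ref{remPaths}), whence $y_i(n,0)=y_i^{V'}(n,0)$ for $i\in V'$, and the induction hypothesis gives $\tfrac1n\max_{i\in V'}y_i(n,0)\to\gamma(A^{V'})=\max_{c\ne c_0}\gamma^{(c)}=:\gamma'$ a.s. For $i\in c_0$, a length-$n$ path from $i$ stays in $c_0$ for some $m\in\{0,\dots,n\}$ steps, then (if $m<n$) crosses one arc into $V'$ — it cannot return — and continues inside $V'$; reading this off~\eqref{eqdefx}, with $M:=\max_{kl}A_{kl}(0)$, $\bar Y_c(m):=\max_{k,l\in c}(A^{(c)}(-1,-m))_{kl}$ and $V(k):=\max_{j\in V'}y^{V'}_j(k,0)$,
\[
y_i(n,0)\ \le\ \max_{0\le m\le n}\Bigl[\ \bar Y_{c_0}(m)\ +\ M^+\!\circ\theta^{-(m+1)}\ +\ V(n-m-1)\circ\theta^{-(m+1)}\ \Bigr]
\]
(with $V(-1):=-\infty$). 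The middle term is harmless: $\tfrac1n\max_{1\le l\le n}M^+\circ\theta^{-l}\to0$ a.s.\ because $M^+\in\L^1$. Both $\bar Y_{c_0}$ and $V$ are \emph{subadditive cocycles} — expand a product of matrices and use $\max_k(a_k+b_k)\le\max_k a_k+\max_k b_k$ — with integrable positive first term, so Kingman's theorem applies; their limits are $\gamma^{(c_0)}$ and, by the induction hypothesis, $\gamma'$. Consequently each has $o(m)$ (resp.\ $o(k)$) subadditive defect and, crucially, the \emph{running maxima} $\max_{m\le n}\bigl(\bar Y_{c_0}(m)-m\gamma^{(c_0)}\bigr)$ and $\max_{k\le n}\bigl(V(k)-k\gamma'\bigr)$ are $o(n)$ a.s.\ (being run from a fixed origin, not ``drawups'').

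The delicate point is the last term of the bracket, because the shift $\theta^{-(m+1)}$ grows with $n$ while $m$ is maximised over, and with only first–moment integrability $V(k)\circ\theta^{-s}$ cannot be controlled uniformly in $s$ in isolation. The resolution exploits that the maximisation over $m$ \emph{couples} the $c_0$-segment and the $V'$-tail through the common budget $m+1+(n-m-1)=n$: subtracting $n\gamma^*$ and using $\gamma^{(c_0)},\gamma'\le\gamma^*$, the bracket equals $n\gamma^*$ plus
\[
\max_{0\le m\le n}\Bigl[\bigl(\bar Y_{c_0}(m)-m\gamma^{(c_0)}\bigr)+\bigl(V(n-m-1)\circ\theta^{-(m+1)}-(n-m-1)\gamma'\bigr)-m(\gamma^*-\gamma^{(c_0)})-(n-m-1)(\gamma^*-\gamma')\Bigr]+o(n),
\]
where both linear terms are non-positive. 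If $\gamma^{(c_0)}<\gamma'$, the penalty on $m$ forces the maximiser to have $m=o(n)$, so the $c_0$-defect is $o(n)$ and the $V'$-tail is evaluated only at shifts $o(n)$; if $\gamma^{(c_0)}>\gamma'$, the penalty on $n-m-1$ forces $n-m-1=o(n)$, so the $V'$-tail is negligible and the $c_0$-defect at lengths $\sim n$ is $o(n)$ by its running-maximum bound; if $\gamma^{(c_0)}=\gamma'$, both penalties vanish and one instead uses that the combined defect along the split path is sublinear, once more by subadditivity and Kingman. Either way the bracket is $n\gamma^*+o(n)$, which with the $i\in V'$ case closes the induction.

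I expect the genuine obstacle to be making this last estimate fully rigorous for general $d$ — precisely, controlling the rare time windows on which the downstream product $V(k)\circ\theta^{-s}$ overshoots $k\gamma'$, by using that on such a window the concurrent $c_0$-segment is correspondingly short, so that the coupling compensates (and, in parallel, an analogous argument inside the $V'$-tail delivered by the induction hypothesis rather than by a single application of Kingman). This is the heart of the argument of Baccelli and Liu~\cite{Baccelli,BaccelliLiu}, checked step by step in~\cite{theseGM}. The degenerate cases $\gamma^{(c_0)}=-\infty$ or $\gamma'=-\infty$ reduce to the above by replacing the offending exponent with an arbitrarily negative constant and letting it tend to $-\infty$.
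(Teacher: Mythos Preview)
The paper does not prove Theorem~\ref{decomplyap}; it quotes it from Baccelli and Liu and points to~\cite{theseGM} for a line-by-line verification that their proof goes through without the extra hypotheses of~\cite{Baccelli,BaccelliLiu}. So there is no in-paper proof to compare against, and your closing sentence --- that the heart of the matter is precisely the Baccelli--Liu argument, checked in~\cite{theseGM} --- lands you in exactly the same place as the paper itself.

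On the substance of your sketch: the easy inequality $\gamma(A)\ge\max_c\gamma^{(c)}$ is fine, and the induction scheme (peel off a source component, decompose each path into a $c_0$-prefix, one crossing arc, and a $V'$-tail) is the right skeleton. The gap you flag is real, and your case analysis does \emph{not} close it. In the case $\gamma^{(c_0)}<\gamma'$ you argue that the linear penalty $-m(\gamma^*-\gamma^{(c_0)})$ forces the maximiser to satisfy $m=o(n)$; but this conclusion already presupposes that the shifted defect $V(n-m-1)\circ\theta^{-(m+1)}-(n-m-1)\gamma'$ is $o(n)$ uniformly in $m$, since otherwise a large tail defect could offset the penalty. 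That uniform control is precisely what a single-origin application of Kingman does not give, and subadditivity yields only the lower bound $Z(m,n)\ge Z(0,n)-Z(0,m)$, which is the wrong direction here. The other two cases have the same circularity. Even granting $m=o(n)$, ``shifts $o(n)$'' is not ``bounded shifts'', so the tail is still not controlled. Your proposal is honest in recognising this and deferring to~\cite{Baccelli,BaccelliLiu,theseGM}; just be aware that the coupling paragraph is a heuristic, not a proof.
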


Applying this theorem to sequences $\left(A^{[c]}(n)\right)_{n\in\N}$ and $\left(A^{\{c\}}(n)\right)_{n\in\N}$, we get the following corollary.
\begin{coro}\label{propGiA2}
For every s.c.c. $c$, we have $$\gamma(A^{\{c\}})=\gamma(A^{[c]})=\gamma^{[c]}.$$
\end{coro}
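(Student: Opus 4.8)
The plan is to derive the corollary from Theorem~\ref{decomplyap} by applying it twice, once to the sequence $\left(A^{[c]}(n)\right)_{n\in\N}$ and once to $\left(A^{\{c\}}(n)\right)_{n\in\N}$, after identifying the strongly connected components of the graphs $\G(A^{[c]})$ and $\G(A^{\{c\}})$. Both of these sequences are stationary and ergodic (their terms are measurable functions of the $A(n)$), and they meet the integrability requirement of Theorem~\ref{decomplyap}, since $\max_{i,j\in F_c}A_{ij}(0)\le\max_{i,j}A_{ij}(0)$ and $\max_{i,j\in H_c}A_{ij}(0)\le\max_{i,j}A_{ij}(0)$, whose positive part is integrable by the standing assumption. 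So the substance of the proof is purely combinatorial.

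First I would record the following elementary fact: if $S\subseteq[1,d]$ is a union of strongly connected components of $\G(A)$, say $S=\bigcup_{\tilde c\in\mathcal S}\tilde c$, then the strongly connected components of the full subgraph of $\G(A)$ on $S$ are exactly the elements of $\mathcal S$, and the submatrix it attaches to $\tilde c\in\mathcal S$ is again $A^{(\tilde c)}$. Indeed, each $\tilde c\in\mathcal S$ lies entirely in $S$ together with all arcs of $\G(A)$ between two of its nodes, so $\tilde c$ remains strongly connected in the subgraph; and passing to a subgraph removes arcs and hence cannot create new circuits, so two distinct components of $\G(A)$ cannot merge. Applying this to $S=F_c$ (with $\mathcal S=E_c$) and to $S=H_c$ (with $\mathcal S=G_c$), Theorem~\ref{decomplyap} gives
\[
\gamma(A^{[c]})=\max_{\tilde c\in E_c}\gamma^{(\tilde c)}=\gamma^{[c]}
\qquad\text{and}\qquad
\gamma(A^{\{c\}})=\max_{\tilde c\in G_c}\gamma^{(\tilde c)},
\]
the middle equality being the very definition of $\gamma^{[c]}$ in Definition~\ref{defGiA}.

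It remains to see that $\max_{\tilde c\in G_c}\gamma^{(\tilde c)}=\gamma^{[c]}$. The inequality ``$\le$'' is immediate since $G_c\subseteq E_c$. For the reverse one, choose $\hat c\in E_c$ with $\gamma^{(\hat c)}=\gamma^{[c]}$ (possible, $E_c$ being finite and nonempty, as it contains $c$). Taking $\tilde c=\hat c=\hat{\hat c}$ in the definition of $G_c$ and using that $\rightarrow$ is reflexive, one has $c\rightarrow\hat c\rightarrow\hat c$ with $\gamma^{(\hat c)}=\gamma^{[c]}$, so $\hat c\in G_c$ and hence $\max_{\tilde c\in G_c}\gamma^{(\tilde c)}\ge\gamma^{(\hat c)}=\gamma^{[c]}$. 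This yields $\gamma(A^{\{c\}})=\gamma(A^{[c]})=\gamma^{[c]}$. The one point deserving a little care is the combinatorial lemma on components of restricted graphs — concretely, checking that a full subgraph on a union of components neither breaks strong connectivity inside a component nor fuses two of them — but this is routine; alternatively, the last step can be shortened by invoking Proposition~\ref{propGiA}~v), which already identifies $G_c=\{\tilde c\in E_c\mid\gamma^{[\tilde c]}=\gamma^{[c]}\}$.
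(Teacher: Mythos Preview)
Your proof is correct and takes essentially the same approach as the paper, which simply states that the corollary follows by applying Theorem~\ref{decomplyap} to the sequences $\left(A^{[c]}(n)\right)_{n\in\N}$ and $\left(A^{\{c\}}(n)\right)_{n\in\N}$. You have merely spelled out the details the paper leaves implicit: verifying the hypotheses of Theorem~\ref{decomplyap}, identifying the strongly connected components of the induced subgraphs, and checking that a maximizing component $\hat c\in E_c$ actually lies in $G_c$.
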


It follows from Proposition~\ref{propGiA} and  the definition of Lyapunov exponents  that for every s.c.c $c$ of $\G(A)$,
$$\max_{i\in F_c}L_i=\lim_n\frac{1}{n}\max_{i\in F_c}y_i(n,0)=\gamma(A^{[c]}).$$

Combining this with Equation~(\ref{eqlim}) and Corollary~\ref{propGiA2}, we deduce that the limit of $\left(\frac{1}{n} y(n,0)\right)_{n\in\N}$ is given by Equation~(\ref{eqcvgcecompy}).

\subsubsection{\texorpdfstring{$A^{\{c\}}(0)$}{$A^{\{c\}}(0)$} has at least one finite entry on each row}
We still have to show that for every s.c.c $c$, $A^{\{c\}}(0)$ almost-surely has at least one finite entry on each row. Let us assume it has none. It means that there exists a s.c.c. $c$ and an $i\in c$ such that the set $$\{\forall j\in H_c, A_{ij}(-1)=-\infty\}$$ has positive probability. On this set, we have:
$$y_i(n,0)\le \max_{j\in F_c\backslash H_c} A_{ij}(-1) +\max_{j\in F_c\backslash H_c} y_j(n-1,0)\circ\theta^{-1}.$$
Dividing by $n$ and letting $n$ to $+\infty$, we have $L_i\le\max_{j\in F_c\backslash H_c} L_j$. Replacing $L$ according to Equation~(\ref{eqcvgcecompy}) we get $\gamma^{[c]}\le\max_{k\in E_c\backslash G_c} \gamma^{[k]}$. This last inequality contradicts Proposition~\ref{propGiA}~$v)$. Therefore, $A^{\{c\}}(0)$  has almost-surely at least one finite entry on each row.

\subsubsection{The limit is deterministic}
Let us assume that $\left(\frac{1}{n}x(n,0)\right)_{n\in\N}$  converges almost-surely to a limit $L'$.

It follows from~\cite[Theorem~6.7]{BouschMairesseEng} that $\left(\frac{1}{n}y(n,0)\right)_{n\in\N}$ converges almost-surely, thus we have
$$\frac{1}{n}y(n,0)-\frac{1}{n+1}y(n+1,0)\stackrel{\P}{\rightarrow} 0.$$
We compound each term of this relation by $\theta^{n+1}$ and, since $x(n,0)=y(n,0)\circ\theta^n $, it proves that:
$$\frac{1}{n}x(n,0)\circ\theta-\frac{1}{n+1}x(n+1,0)\stackrel{\P}{\rightarrow}0.$$
When $n$ tends to $+\infty$, it  becomes $L'\circ\theta -L'=0$. Since $\theta$ is ergodic, this implies that $L'$ is deterministic.\\

 Since $\frac{1}{n}y(n,0)=\frac{1}{n}x(n,0)\circ\theta^n$, $L'$ and $L$ have the same law. Since $L'$ is deterministic, $L=L'$ almost-surely, therefore $L$ is also the limit of $\left(\frac{1}{n}x(n,0)\right)_{n\in\N}$.
This proves formula~(\ref{eqcvgcecompx}) and concludes the proof of Theorem~\ref{thCN}

\subsection{Sufficient conditions}
\subsubsection{Right products}
In this section, we prove the following proposition, which is a converse to Theorem~\ref{thCN}. In the sequel, $\1$~will denote the vector all coordinates of which are equal to~$1$.
\begin{prop}\label{thgene}
Let $\left(A(n)\right)_{n\in\N}$ be an ergodic sequence of random matrices with values in $\rma^{d\times d}$ such that the positive part of $\max_{ij} A_{ij}(0)$ is integrable and that the three following hypotheses are  satisfied:
\begin{enumerate}
\item For every s.c.c $c$ of $\G(A)$, $A^{\{c\}}(0)$ almost-surely has at least one finite entry on each row.
\item For every dominating s.c.c $c$ of $\G(A)$, $\lim_n\frac{1}{n}y^{(c)}(n,0)=\gamma^{(c)}\1~\mathrm{ a.s.}$
\item For every subsets $I$ and $J$  of $[1,\cdots,d]$, such that random matrices $\tilde{A}(n)=\left(A_{ij}(n)\right)_{i,j\in I\cup J}$ almost-surely have at least one finite entry on each row and split along $I$ and $J$ following the equation
\begin{equation}\label{decompblocs}
\tilde{A}(n)=:\left(\begin{array}{cc}
B(n)&D(n)\\
-\infty&C(n)
\end{array}\right),
\end{equation}
such that $\G(B)$ is strongly connected and  $D(n)$ is not almost-surely $(-\infty)^{I\times J}$, we have:
\begin{equation}\label{eqexistchem}
\P\left(\left\{\exists i\in I, \forall n\in\N, \left(B(-1)\cdots B(-n)D(-n-1)0\right)_i=-\infty\right\}\right)=0.
\end{equation}
\end{enumerate}
Then the limit of $\left(\frac{1}{n}y(n,0)\right)_{n\in\N}$ is given by Equation~(\ref{eqcvgcecompy}).\\

If Hypothesis~1. is strengthened by demanding that $A^{\{c\}}(0)0$ is integrable, then the sequence $\left(\frac{1}{n}x(n,0)\right)_{n\in\N}$ converges almost-surely and its limit is given by Equation~(\ref{eqcvgcecompx}).
\end{prop}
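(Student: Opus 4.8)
The plan is to prove the statement about $y$ by induction on the number $k$ of strongly connected components of $\G(A)$, and then to read off the statement about $x$ from the convergence theorems of Bousch and Mairesse. Two preliminary observations come first: Hypothesis~1 forces $A(0)$ to have almost surely at least one finite entry on each row (each node $i$ lies in $H_{c(i)}$ for its own s.c.c.\ $c(i)$, so $\max_j A_{ij}(0)\ge\max_{j\in H_{c(i)}}A_{ij}(0)>-\infty$ a.s.), whence $y(n,0)$ is a.s.\ finite; and when $k=1$ the single s.c.c.\ $[1,d]$ is dominating, $y^{([1,d])}(n,0)=y(n,0)$, and Hypothesis~2 is literally the desired conclusion $\tfrac1n y(n,0)\to\gamma(A)\1$. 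This is the base case.

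For the inductive step ($k\ge2$) I would pick a source s.c.c.\ $c_0$ (a maximal element of the s.c.c.\ order) and set $S:=[1,d]\setminus c_0$. Since $c_0$ is a source, no arc of $\G(A)$ enters $c_0$ from $S$, so $S$ is a nonempty union of s.c.c.'s that no arc leaves; hence $y_j(n,0)=y^{(S)}_j(n,0)$ for $j\in S$, where $A^{(S)}(n):=(A_{ij}(n))_{i,j\in S}$. Because $S$ is downward closed, all the data attached to s.c.c.'s inside $S$ (the exponents $\gamma^{(c)}$, the sets $E_c,G_c,H_c$, dominance) and the submatrices occurring in Hypothesis~3 are the same in $\G(A^{(S)})$ as in $\G(A)$, so $A^{(S)}$ again satisfies the integrability assumption and Hypotheses~1--3 and has $k-1$ components; by the induction hypothesis $\tfrac1n y_j(n,0)\to\max_{j\to c}\gamma^{(c)}$ a.s.\ for every $j\in S$.

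It then remains to treat $i\in c_0$, where $\max_{i\to c}\gamma^{(c)}=\gamma^{[c_0]}$. For the upper bound, $y_i(n,0)=y^{[c_0]}_i(n,0)\le\max_{j\in F_{c_0}}y^{[c_0]}_j(n,0)$ by Proposition~\ref{propGiA}, and Theorem-Definition~\ref{defexptop} applied to $A^{[c_0]}$ together with Corollary~\ref{propGiA2} gives $\limsup_n\tfrac1n y_i(n,0)\le\gamma(A^{[c_0]})=\gamma^{[c_0]}$. For the matching lower bound I would split into cases. If $c_0$ is dominating, then $H_{c_0}=c_0$, so $y_i(n,0)\ge y^{(c_0)}_i(n,0)$ and Hypothesis~2 yields $\liminf_n\tfrac1n y_i(n,0)\ge\gamma^{(c_0)}=\gamma^{[c_0]}$. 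If $c_0$ is not dominating, then $H_{c_0}\setminus c_0\neq\emptyset$ and the matrix $A^{\{c_0\}}(n)=(A_{ij}(n))_{i,j\in H_{c_0}}$ splits along $(c_0,H_{c_0}\setminus c_0)$ as in~(\ref{decompblocs}); it has a finite entry on each row a.s.\ by Hypothesis~1, $\G(A^{(c_0)})$ is strongly connected, and the off-diagonal block is not a.s.\ $(-\infty)$ since the first s.c.c.\ met on a path from $c_0$ to a downstream s.c.c.\ of exponent $\gamma^{[c_0]}$ lies in $G_{c_0}$. Hypothesis~3 then provides an a.s.\ event on which, for every $i\in c_0$, some finite path runs from $i$ inside $c_0$ for $n$ steps and then, by one more arc, reaches a node $j^*\in H_{c_0}\setminus c_0$; since $j^*$ lies in an s.c.c.\ of $G_{c_0}$ we have $\max_{j^*\to c}\gamma^{(c)}=\gamma^{[c_0]}$ and, $j^*$ being in $S$, the induction hypothesis gives $\tfrac1n y_{j^*}(n,0)\to\gamma^{[c_0]}$ a.s. Concatenating this finite path (of fixed finite weight $W$, hence contributing nothing to the Cesàro limit) with optimal paths from $j^*$ yields $y_i(n+1+n',0)\ge W+y_{j^*}(n',0)\circ\theta^{-(n+1)}$, and letting $n'\to\infty$ (the shift by $\theta^{-(n+1)}$ with $n$ a.s.\ finite being harmless, as the limit is a deterministic constant) gives $\liminf_n\tfrac1n y_i(n,0)\ge\gamma^{[c_0]}$; this closes the induction. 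I expect this lower bound to be the main obstacle: one must produce, almost surely, a finite ($>-\infty$) path from $i$ into a region whose limit is already known, and the key is to invoke Hypothesis~3 with the exit set $J=H_{c_0}\setminus c_0$ --- so that the ambient matrix is $A^{\{c_0\}}(0)$, which has finite rows by Hypothesis~1 (making Hypothesis~3 applicable) while every node reachable by the exit arc still carries the limit $\gamma^{[c_0]}$; taking $J$ to be a single adjacent component, or all of $F_{c_0}\setminus c_0$, would fail one of these two requirements.

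For the statement about $x$ I would add the assumption $A^{\{c\}}(0)0\in\L^1$ for all $c$. Having shown that the limit of $\tfrac1n y(n,0)$ exists and equals the deterministic vector of~(\ref{eqcvgcecompy}), I would invoke \cite[Theorem~6.18]{BouschMairesseEng}: the present integrability hypothesis is precisely what ensures $B(0)0\in\L^1$ for the submatrices $B$ of $A(0)$ arising in the analysis (notably $A^{(c_0)}(0)0=A^{\{c_0\}}(0)0$ in the dominating case), which lets one upgrade the deterministic $y$-limit to almost-sure convergence of $\tfrac1n x(n,0)$ to the same limit; alternatively one can rerun the induction above for $x$, with the $x$-part of Theorem~\ref{thHong} as base case and Bousch and Mairesse's theorem applied to the strongly connected $A^{(c_0)}$ in the dominating case. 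The routine verification that Hypotheses~1--3 and the integrability descend to $A^{(S)}$, though essential, I would isolate as a separate lemma; it hinges entirely on $S$ being downward closed.
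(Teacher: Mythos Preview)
Your treatment of the $y$-statement is essentially correct and close in spirit to the paper's, though organized differently: you induct on the total number of s.c.c.'s by peeling off a source component, whereas the paper inducts on $|G_c|$ and proves the sharper intermediate claim $\tfrac1n\,y^{\{c\}}(n,0)\to\gamma^{[c]}\1$ for every~$c$. Both schemes feed Hypothesis~3 the same data $(I,J)=(c,H_c\setminus c)$ inside $A^{\{c\}}$, and both use Hypothesis~2 as the base. One small gap in your version: when the source $c_0$ is \emph{trivial} (a single node with $A^{(c_0)}=-\infty$ a.s.), the graph $\G(A^{(c_0)})$ has no arc and fails the ``$\G(B)$ strongly connected'' clause of Hypothesis~3, so you cannot invoke it. The paper notes this explicitly and bypasses the problem by observing that then $D(-1)0=(A^{\{c_0\}}(-1)0)^{c_0}\in\R^{c_0}$, so the required finite exit path exists already at time~$1$.

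Your argument for the $x$-statement, however, does not go through as written. The paper states explicitly (see the paragraph introducing Proposition~\ref{thgeneGche}) that \cite[Theorem~6.18]{BouschMairesseEng} requires a reinforced integrability assumption that is \emph{not} implied by the hypothesis ``$A^{\{c\}}(0)0\in\L^1$ for all $c$'' of the present proposition; so a direct appeal to Bousch--Mairesse for the whole system $A$ is not justified here. The paper's route is different: it invokes Vincent's Theorem-Definition~\ref{thvincent} and its Corollary~\ref{lempassgauche}, applied not to $A$ but to each $A^{\{c\}}$ separately. Since the $y$-part already gives $\tfrac1n\,y^{\{c\}}(n,0)\to\gamma^{[c]}\1$, one has $\gamma(A^{\{c\}})=\gamma_b(A^{\{c\}})=\gamma^{[c]}$, and the assumed integrability of $A^{\{c\}}(0)0$ is exactly what Corollary~\ref{lempassgauche} needs to conclude $\tfrac1n\,x^{\{c\}}(n,0)\to\gamma^{[c]}\1$; the inequalities $x_i^{\{c\}}(n,0)\le x_i(n,0)=x_i^{[c]}(n,0)$ then sandwich $\tfrac1n\,x_i(n,0)$ to the same limit. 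Your ``alternatively rerun the induction for $x$'' is on the right track but needs this Vincent-based transfer at each step; neither the $y$-only Theorem~\ref{thHong} nor a blanket call to Bousch--Mairesse supplies it.
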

Hypothesis~1. is necessary according to Theorem~\ref{thCN}, Hypothesis~2 ensures the basis of the inductive proof, while Hypothesis~3 ensures the inductive step.
\begin{rem}[Non independent case]
Proposition~\ref{thgene} does not assume the independence of the $A(n)$.
Actually, it also implies that $\left(\frac{1}{n}x(n,0)\right)_{n\in\N}$ almost surely if the $A(n)$ have fixed support (that is $\P(A_{ij}(n)=-\infty)\in\{0,1\}$) and the powers of the shift are ergodic, which is an improvement of~\cite{Baccelli}. It also allows to prove the convergence when the diagonal entries of the $A(n)$ are almost surely finite, under weaker integrability conditions than in~\cite{BouschMairesseEng} (see~\cite{LGNGM} or~\cite{theseGM} for details).
\end{rem}

\begin{rem}[Paths on $\G(A)$, continued]
Let us interpret the three hypotheses with the paths on~$\G(A)$.
\begin{enumerate}
\item The hypothesis on $A^{\{c\}}(0)$ means that, whatever the initial condition~\mbox{$i\in c$,} there is always an infinite path beginning in~$i$ and not leaving~$H_c$.
\item The hypothesis on dominating s.c.c means that, whatever the initial condition~$i$ in a dominating s.c.c~$c$, there is always a path beginning in~$i$ with average weight $\gamma^{(c)}$. 
The proof of Theorem~\ref{decomplyap} (see~\cite{Baccelli} or~\cite{theseGM}) can be adapted to show that it is a necessary condition.
\item We will use the last hypothesis with $\tilde{A}(n)=A^{\{c\}}(n)$, $B(n)=A^{(c)}(n)$.  It means that there is a path from~\mbox{$i\in c$}, to $H_c\backslash c$.
Once we know that the limit of $\left(\frac{1}{n}y(n,0)\right)_{n\in\N}$ is given by Equation~(\ref{eqcvgcecompy}) this hypothesis is obviously necessary when $\gamma^{(c)}<\gamma^{[c]}$.
\end{enumerate}
\end{rem}

The remainder of this subsection is devoted to the proof of Proposition~\ref{thgene}.
It follows from Propositions~\ref{propGiA} and~\ref{propGiA2} and the definition of Lyapunov exponents that we have, for every s.c.c $c$ of $\G(A)$,
\begin{equation}\label{eqmajor}
\limsup_n\frac{1}{n}y^{c}(n,0)\le\gamma^{[c]}\1~\mathrm{a.s.}
\end{equation}

Therefore, it is sufficient to show that $\liminf_n\frac{1}{n}y^{c}(n,0)\ge\gamma^{[c]}\1~\mathrm{a.s.}$ Because of Proposition~\ref{propGiA}~$i)$, 
\begin{equation}\label{eqrec}
\lim_n\frac{1}{n}y^{\{c\}}(n,0)=\gamma^{[c]}\1.
\end{equation}
is a stronger statement. We prove Equation~(\ref{eqrec}) by induction on the size of $G_c$. The initialization of the induction is exactly Hypothesis~$2.$ of Proposition~\ref{thgene}.

Let us assume that Equation~(\ref{eqrec}) is satisfied by every $c$ such that the size of $G_c$ is less than $N$, and let $c$ be such that the size of $G_c$ is~$N+1$. Let us take $I=c$ and $J=H_c\backslash c$. If $c$ is not trivial, it is  the situation of Hypothesis~$3.$ with $\tilde{A}=A^{\{c\}}$, which almost-surely has at least one finite entry on each row thanks to Hypothesis~$1.$ Therefore,  Equation~(\ref{eqexistchem}) is satisfied. If $c$ is trivial, $\G(B)$ is not strongly connected, but Equation~(\ref{eqexistchem}) is still satisfied because $D(-1)0=(\tilde{A}(-1)0)^I\in\R^I$.

Moreover, $J$ is the union of the $\tilde{c}$ such that $\tilde{c}\in G_c\backslash\{c\}$, thus the  induction hypothesis implies that:
$$\forall j\in J, j\in \tilde{c}\Rightarrow\lim_n\frac{1}{n}\left(C(-1,-n)0\right)_j=\lim_n\frac{1}{n}y^{\{\tilde{c}\}}_j(n,0)=\gamma^{[\tilde{c}]}~\mathrm{ a.s. }. $$
Because of Corollary~\ref{propGiA2}~$ii)$, $\gamma^{[\tilde{c}]}=\gamma^{[c]},$ therefore the right side of the last equation is $\gamma^{[c]}$ and we have:
\begin{equation}\label{eqrecJ}
\lim_n\frac{1}{n}(y^{\{c\}})^J(n,0)=\lim_n \frac{1}{n}C(-1,-n)0= \gamma^{[c]}\1~\mathrm{ a.s. }.
\end{equation}

Equation~(\ref{eqexistchem}) ensures that, for every $i\in I$, there exists almost-surely a $T\in \N$ and a $j\in J$ such that $\left(B(-1,-T)D(-T-1)\right)_{ij}\neq-\infty$. Since we have $\lim_n\frac{1}{n}\left(C(-T,-n)0\right)_{j}=\gamma^{[c]} \textrm{ a.s.}$, it implies that:
\begin{eqnarray*}
\lefteqn{\liminf_n\frac{1}{n} y_i^{\{c\}}(n,0)}\\
&\ge& \lim_n \frac{1}{n}\left(B(-1,-T)D(-T-1)\right)_{ij}+ \lim_n\frac{1}{n}\left(C(-T,-n)0\right)_{j}=\gamma^{[c]}~\mathrm{ a.s. }
\end{eqnarray*}
Because of upper bound~(\ref{eqmajor}) and inequality~(\ref{OrdreDesy}), it implies that
$$\lim_n\frac{1}{n}(y^{\{c\}})^I(n,0)= \gamma^{[c]}\1~\mathrm{ a.s. }.$$
which, because of Equation~(\ref{eqrecJ}), proves Equation~(\ref{eqrec}). This  concludes the induction and  the proof of Proposition~\ref{thgene}.

\subsubsection{Left products}
As recalled in the introduction, T.~Bousch an J.~Mairesse proved that $\left(\frac{1}{n}x(n,0)\right)_{n\in\N}$ converges almost-surely as soon as the limit of $\left(\frac{1}{n}y(n,0)\right)_{n\in\N}$ is deterministic. Therefore, the hypotheses of Proposition~\ref{thgene} should imply the existence of the cycle time. But the theorem in~\cite[Theorem~6.18]{BouschMairesseEng} assumes a reinforced integrability assumption, that is not necessary for our proof. We will prove the following proposition in this section:
\begin{prop}\label{thgeneGche}
Let $\left(A(n)\right)_{n\in\N}$ be an ergodic sequence of random matrices with values in $\rma^{d\times d}$  such that the positive part of $\max_{ij} A_{ij}(0)$ is integrable and that satisfies the three hypotheses of Proposition~\ref{thgene}.

If Hypothesis~1. is strengthened by demanding that $A^{\{c\}}(0)0$ is integrable, then the sequence $\left(\frac{1}{n}x(n,0)\right)_{n\in\N}$ converges almost-surely and its limit is given by Equation~(\ref{eqcvgcecompx}).
\end{prop}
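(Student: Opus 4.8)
The plan is to reduce the convergence of $\left(\frac{1}{n}x(n,0)\right)_{n\in\N}$ to the already-established convergence of $\left(\frac{1}{n}y(n,0)\right)_{n\in\N}$ (Proposition~\ref{thgene}) together with the deterministic character of its limit, by invoking the result of Bousch and Mairesse. Indeed, under the reinforced integrability assumption ``$A^{\{c\}}(0)0$ is integrable'' we have in particular $A(0)0\in\L^1$ (take $c$ such that $1\in H_c$, or simply note that the full matrix has at least one finite entry per row so $A(0)0$ is dominated by $\max_{A_{ij}(0)\neq-\infty}|A_{ij}(0)|$ plus $\max_i\min_j$ considerations); hence \cite[Theorem~6.7]{BouschMairesseEng} applies and gives almost-sure convergence of $\left(\frac{1}{n}y(n,0)\right)_{n\in\N}$ — which Proposition~\ref{thgene} identifies, with its limit being the deterministic vector $L$ with $L_i=\max_{i\rightarrow c}\gamma^{(c)}$. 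So the key input is: \emph{the limit of $\left(\frac{1}{n}y(n,0)\right)$ is deterministic}, which is exactly what Proposition~\ref{thgene} delivers.

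First I would record the integrability reduction: for every submatrix $B$ of $A(0)$ that almost-surely has at least one finite entry on each row, $B0$ is integrable. This follows because $(B0)_i=\max_{j}B_{ij}$ lies between $\min_{B_{ij}\neq-\infty}B_{ij}$ and $\max_{ij}B_{ij}$ on the row, and the strengthened Hypothesis~1 bounds exactly these extremal finite entries in $\L^1$. In particular $A(0)0\in\L^1$, so the Bousch--Mairesse convergence theorem for $y$ is available. Then I would quote \cite[Theorem~6.18]{BouschMairesseEng} (or re-derive the relevant half): when $\left(\frac{1}{n}y(n,0)\right)$ converges almost surely to a \emph{deterministic} limit and the integrability is strong enough, $\left(\frac{1}{n}x(n,0)\right)$ converges almost surely to the same limit. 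Since Proposition~\ref{thgene} gives both the convergence of $\frac1n y(n,0)$ and the explicit deterministic value $L$, we conclude that $\frac1n x(n,0)\to L$ a.s., which is Equation~(\ref{eqcvgcecompx}).

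Alternatively — and this is the route I would actually write out in case one wants to avoid citing the strongest form of \cite{BouschMairesseEng} — one can argue directly. Use non-expansiveness of $x\mapsto A(k)x$ in $L^\infty$: the sequence $z(n):=\frac1n x(n,0)$ has $\|x(n+1,0)-x(n,0)\|_\infty\le\|A(n)0\|_\infty+\text{(diameter terms)}$, and since $x(n,0)=y(n,0)\circ\theta^n$, we have $\frac1n x(n,0)\stackrel{\text{law}}{=}\frac1n y(n,0)$, so $z(n)\to L$ in distribution, hence in probability since $L$ is a constant. Upgrading convergence in probability to almost-sure convergence is the real work: here one exploits that $\frac1n y(n,0)$ converges a.s.\ by Proposition~\ref{thgene}, writes the telescoping identity $\frac1n x(n,0)\circ\theta-\frac1{n+1}x(n+1,0)=\bigl(\frac1n y(n,0)-\frac1{n+1}y(n+1,0)\bigr)\circ\theta^{n+1}\to 0$ in probability, and combines this with a subadditivity/regularity estimate coming from the integrable increments $A(0)0$ to control oscillations of $z$. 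I expect the main obstacle to be precisely this last step: passing from the in-probability statement plus a.s.\ convergence of the ``$y$-side'' to a.s.\ convergence of the ``$x$-side'' in the degenerate (some entries $-\infty$) setting, since the coordinatewise arguments used for $y$ do not transfer verbatim to $x$; the cleanest fix is to cite \cite[Theorem~6.18 or 6.7]{BouschMairesseEng} with the observation — already isolated above — that the strengthened Hypothesis~1 was chosen exactly so that $B0\in\L^1$ for all relevant submatrices $B$, which is all the integrability the Bousch--Mairesse machinery needs.
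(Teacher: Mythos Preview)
Your primary route --- invoke \cite[Theorem~6.18]{BouschMairesseEng} once Proposition~\ref{thgene} has made the limit of $\frac1n y(n,0)$ deterministic --- is exactly what the paper explicitly declines to do: the sentence immediately preceding Proposition~\ref{thgeneGche} notes that Theorem~6.18 of \cite{BouschMairesseEng} ``assumes a reinforced integrability assumption, that is not necessary for our proof''. You assert that ``$A^{\{c\}}(0)0\in\L^1$ for all $c$'' is ``all the integrability the Bousch--Mairesse machinery needs'', but you do not verify this, and the paper's phrasing (and Remark~\ref{remThiid}, where the Bousch--Mairesse condition is described as a specialization of a uniform-topicality condition) indicates it is genuinely stronger than what is assumed here. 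Your alternative route --- upgrade convergence in probability to almost-sure convergence --- you leave unfinished, as you yourself acknowledge.

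The paper's argument bypasses this difficulty by a different and more elementary reduction. The key observation is that, for each s.c.c.\ $c$, the submatrix $A^{\{c\}}$ has the special feature that \emph{every} s.c.c.\ $\tilde c\in G_c$ of $\G(A^{\{c\}})$ satisfies $\gamma^{[\tilde c]}=\gamma^{[c]}$ (Proposition~\ref{propGiA}~$v)$). Therefore Proposition~\ref{thgene}, applied to $A^{\{c\}}$, yields $\frac1n y^{\{c\}}(n,0)\to\gamma^{[c]}\1$ a.s.: the limit is a \emph{constant} vector. For constant-vector limits the $x/y$ transfer is elementary: Vincent's theorem (Theorem-Definition~\ref{thvincent}, a direct consequence of Kingman) says that $\frac1n\max_i x_i$ and $\frac1n\min_i x_i$ converge a.s.\ to the same constants as their $y$-counterparts, whence $\frac1n x^{\{c\}}(n,0)\to\gamma^{[c]}\1$ a.s.\ (Corollary~\ref{lempassgauche}). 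This step needs only $A^{\{c\}}(0)0\in\L^1$, which is precisely the strengthened Hypothesis~1. The sandwich $x^{\{c\}}_i\le x_i^{[c]}=x_i$ and $\limsup_n\frac1n x_i\le\gamma^{[c]}$ then finishes. The idea you are missing is that restricting to the blocks $A^{\{c\}}$ flattens the limit to a multiple of $\1$, where the passage from $y$ to $x$ becomes a one-line consequence of Kingman rather than requiring the full strength of \cite[Theorem~6.18]{BouschMairesseEng}.
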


To deduce the results on $x(n,0)$ from those on $y(n,0)$, we introduce the following theorem-definition, which is a special case of J.-M.~Vincent~\cite[Theorem 1]{vincent} and  directly follows from Kingman's theorem:

\begin{th-def}[J.-M.~Vincent~\cite{vincent}]\label{thvincent}
If $\left(A(n)\right)_{n\in\Z}$ is a stationary and ergodic sequence of random matrices  with values in $\rma^{d\times d}$ and almost-surely at least one finite entry on each row such that $A(0)0$ is integrable, then there are two real numbers $\gamma(A)$ and $\gamma_b(A)$ such that 
$$\lim_n\frac{1}{n}\max_ix_i(n,0)=\frac{1}{n}\max_iy_i(n,0)=\gamma(A) ~\mathrm{ a.s.} $$
$$\lim_n\frac{1}{n}\min_ix_i(n,0)=\frac{1}{n}\min_iy_i(n,0)=\gamma_b(A) ~\mathrm{ a.s.} $$
\end{th-def}
It implies the following corollary, which makes the link between the results on~$\left(y(n,0)\right)_{n\in\N}$ and those on~$\left(x(n,0)\right)_{n\in\N}$ when all $\gamma^{[c]}$ are equal, that is when $\gamma(A)=\gamma_b(A)$.
\begin{coro}\label{lempassgauche}
If $\left(A(n)\right)_{n\in\Z}$ is a stationary and ergodic sequence of random matrices  with values in $\rma^{d\times d}$ and almost-surely at least one finite entry on each row such that $A(0)0$ is integrable then $$ \lim_n\frac{1}{n}x(n,0)=\gamma(A)\1\textrm{ if and only if }\lim_n\frac{1}{n}y(n,0)=\gamma(A)\1.$$
\end{coro}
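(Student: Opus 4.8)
The plan is to derive the equivalence directly from Theorem-Definition~\ref{thvincent}, which produces two deterministic constants $\gamma(A)$ and $\gamma_b(A)$ such that $\frac{1}{n}\max_i x_i(n,0)$ and $\frac{1}{n}\max_i y_i(n,0)$ both converge almost-surely to $\gamma(A)$, while $\frac{1}{n}\min_i x_i(n,0)$ and $\frac{1}{n}\min_i y_i(n,0)$ both converge almost-surely to $\gamma_b(A)$. Once this is in hand, the whole statement is a sandwiching argument combined with the observation that the two implications are symmetric in $x$ and $y$.

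First I would prove the forward implication. Assume $\lim_n\frac{1}{n}x(n,0)=\gamma(A)\1$ almost-surely. Then every coordinate converges to $\gamma(A)$; in particular $\frac{1}{n}\min_i x_i(n,0)\to\gamma(A)$, so by uniqueness of the limit in Theorem-Definition~\ref{thvincent} we obtain $\gamma_b(A)=\gamma(A)$. Hence $\frac{1}{n}\min_i y_i(n,0)$ and $\frac{1}{n}\max_i y_i(n,0)$ both converge almost-surely to the same value $\gamma(A)$. Since for every coordinate $j$ one has $\min_i y_i(n,0)\le y_j(n,0)\le\max_i y_i(n,0)$, dividing by $n$ and letting $n\to\infty$ yields $\frac{1}{n}y_j(n,0)\to\gamma(A)$ for each $j$, that is $\lim_n\frac{1}{n}y(n,0)=\gamma(A)\1$.

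The converse is obtained by exchanging the roles of $x$ and $y$ throughout: if $\lim_n\frac{1}{n}y(n,0)=\gamma(A)\1$, then $\frac{1}{n}\min_i y_i(n,0)\to\gamma(A)$ forces $\gamma_b(A)=\gamma(A)$, so both $\frac{1}{n}\min_i x_i(n,0)$ and $\frac{1}{n}\max_i x_i(n,0)$ converge to $\gamma(A)$, and the same squeeze via $\min_i x_i(n,0)\le x_j(n,0)\le\max_i x_i(n,0)$ gives $\lim_n\frac{1}{n}x(n,0)=\gamma(A)\1$.

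There is essentially no obstacle: the only point worth isolating is that the hypothesis $\frac{1}{n}x(n,0)\to\gamma(A)\1$ (respectively for $y$) is exactly what forces the coincidence $\gamma_b(A)=\gamma(A)$ required to run the sandwich in the other direction; the rest is the elementary monotonicity $\min\le(\cdot)_j\le\max$ together with the almost-sure convergence of the extremal averages already packaged in Theorem-Definition~\ref{thvincent}.
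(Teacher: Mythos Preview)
Your proof is correct and is exactly the argument the paper intends: it states the corollary as an immediate consequence of Theorem-Definition~\ref{thvincent}, the point being that either hypothesis forces $\gamma_b(A)=\gamma(A)$, after which the coordinatewise squeeze between $\min_i$ and $\max_i$ gives the other limit. You have simply written out what the paper leaves implicit.
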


Let us go back to the proof of the general result on $\left(x(n,0)\right)_{n\in\N}$.
Because of Proposition~\ref{propGiA} and Proposition~\ref{propGiA2} and  the definition of Lyapunov exponents, we already have, for every s.c.c $c$ of $\G(A)$,
$$\limsup_n\frac{1}{n}x^{c}(n,0)\le\gamma^{[c]}\1~\mathrm{a.s.}$$

Therefore it is sufficient to show that $\liminf_n\frac{1}{n}x^{c}(n,0)\ge\gamma^{[c]}\1~\mathrm{a.s.}$ and even that
$$\lim_n\frac{1}{n}x^{\{c\}}(n,0)=\gamma^{[c]}\1.$$
Because of corollary~\ref{lempassgauche}, it is equivalent to $\lim_n\frac{1}{n}y^{\{c\}}(n,0)=\gamma^{[c]}\1.$
Since all s.c.c of $\G(A^{\{c\}})$ are s.c.c of $\G(A)$ and have the same Lyapunov exponent~$\gamma^{(c)}$, it follows from the result on the~$y(n,0)$ applied to $A^{\{c\}}$.

\subsection{Independent case}
In this section, we prove Theorem~\ref{thiid1}. 

Because of Theorem~\ref{thCN}, it is sufficient to show that, if, for every s.c.c $c$, $A^{\{c\}}$ almost-surely has at least one finite entry on each row, then the sequence $\left(\frac{1}{n}x(n,0)\right)$ converges almost-surely. To do this, we will prove that, in this situation, the hypotheses of Proposition~\ref{thgeneGche} are satisfied. Hypothesis~$1.$ is exactly Hypothesis~$1.$ of Theorem~\ref{thiid1} and Hypotheses $2.$ and $3.$ respectively follow from the next lemma and theorem.

\begin{defi} 
For every matrix $A\in\rma^{d\times d}$, the pattern matrix $\widehat{A}$ is defined by $\widehat{A}_{ij}=-\infty$ if $A_{ij}=-\infty$ and $A_{ij}=0$ otherwise.
\end{defi}
For every matrix $A,B\in\rma^{d\times d}$, we have $\widehat{A  B}=\widehat{A} \widehat{B}$.

\begin{lem}\label{lemMifini}\ 
Let $\sAn$ be a stationary sequence of random matrices with values in $\rma^{d\times d}$ and almost-surely at least one finite entry on each row. Let us assume that there exists a partition $(I,J)$ of $[1,\cdots,d]$ such that $A=\tilde{A}$ satisfy Equation~(\ref{decompblocs}), with $\G(B)$ strongly connected.
For every $i\in I$, let us define
$$\A_i:=\left\{\forall n\in\N, \left(B(1,n)D(n+1)0\right)_i=-\infty\right\}.$$
\begin{enumerate}
\item If $\omega\in\A_i$, then we have $\forall n\in\N, \exists i_n\in I \left(B(1,n)\right)_{ii_n}\neq -\infty.$
\item If the set $\mE=\left\{M\in \{0,-\infty\}^{d\times d}\left|\P\left(\widehat{A}(1,n)=M\right)>0\right.\right\}$ is a semigroup, and if $\P\left(D=(-\infty)^{I\times J}\right)<1$, then for every $i\in I$, we have $ \P(\A_i)=0.$
\end{enumerate}
\end{lem}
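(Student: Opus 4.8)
First I would establish part~1, which is essentially a bookkeeping argument. Suppose $\omega\in\A_i$. Since $\tilde A(1,n)=\tilde A(n)\cdots\tilde A(1)$ is block upper-triangular with the same zero block in the lower-left corner, its $(i,\cdot)$-row restricted to the columns in $J$ is $\left(B(1,n-1)D(n)\oplus\cdots\right)$, and more to the point the vector $\tilde A(1,n)0$ restricted to $I$ decomposes as $B(1,n)0^I\oplus\bigoplus_{k=1}^{n}B(1,k-1)D(k)0^J\circ(\text{shift})$; the hypothesis $\omega\in\A_i$ kills, for every $n$, the term $\left(B(1,n)D(n+1)0\right)_i$, hence by a finite telescoping it kills \emph{all} the contributions coming through $J$ in rows indexed by $I$. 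But $\tilde A(n)$ has a finite entry on row~$i$ for every $n$, so there must be a finite entry on row~$i$ of $\tilde A(1,n)$ lying in a column of $I$; concretely $\left(B(1,n)\right)_{i i_n}\neq-\infty$ for some $i_n\in I$. (If instead every finite entry on row~$i$ landed in $J$ at some stage, composing with the already-known finite path would produce a finite entry of $B(1,k)D(k+1)0$ at coordinate $i$, contradicting $\A_i$.) I would write this out carefully by induction on $n$, peeling off one matrix at a time.

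For part~2, the plan is to argue by contradiction and exploit that $\mE$ is a \emph{finite} semigroup. Assume $\P(\A_i)>0$ for some $i\in I$. On $\A_i$, part~1 gives an infinite sequence $(i_n)_{n\ge1}$ in the finite set $I$ with $\widehat B(1,n)_{i i_n}=0$ for all $n$; in terms of pattern matrices this says that along the trajectory the $(i,\cdot)$-row of $\widehat{\tilde A}(1,n)$ is $-\infty$ on all of $J$, for every $n$. Now the pattern matrices $\widehat{\tilde A}(1,n)$ take values in $\mE$, a finite set, and by stationarity/ergodicity (or just the pigeonhole principle along a positive-probability event, together with independence when we are in the i.i.d.\ application) some pattern $M\in\mE$ occurs as $\widehat{\tilde A}(1,n)$ with positive probability while having its $(i,J)$-block entirely $-\infty$. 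Because $\mE$ is a semigroup, $M\cdot M'\in\mE$ for any $M'\in\mE$; choosing $M'=\widehat D$-carrying pattern, which exists with positive probability since $\P(D=(-\infty)^{I\times J})<1$, and using that $\G(B)$ is strongly connected so some power of $\widehat B$ connects $i$ to the particular row of $B$ that feeds the finite entry of $D$, I get a pattern in $\mE$ whose $(i,J)$-block is \emph{not} all $-\infty$. This contradicts the requirement that all patterns reachable from $i$ within $\A_i$ stay $-\infty$ on $J$ --- more precisely, it contradicts that $\A_i$, an event of positive probability, forces $\left(B(1,n)D(n+1)0\right)_i=-\infty$ for all $n$, because with positive probability the concatenation realizing the above pattern product occurs.

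The main obstacle I anticipate is making the last step quantitative: passing from ``$\mE$ is a semigroup and contains a pattern with a finite entry linking $I$ to $J$'' to ``on a positive-probability sub-event of $\A_i$ the coordinate $\left(B(1,n)D(n+1)0\right)_i$ is finite for some $n$.'' This requires (i) using strong connectivity of $\G(B)$ to find a fixed length $\ell$ and a fixed index $j_0\in I$ with $\widehat B^{\ell}_{i j_0}=0$ in the semigroup $\mE$, (ii) finding $j_1\in J$ with $\widehat D_{j_0 j_1}=0$ in some pattern of positive probability, and (iii) arguing that the \emph{composed} pattern, being in $\mE$, is actually attained by $\widehat{\tilde A}(1,N)$ for some $N$ with positive probability, so that on that event $\omega\notin\A_i$. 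Here one must be slightly careful that ``$M\in\mE$'' only says $M$ is attained at \emph{some} random time, not at a prescribed deterministic time; but since $\A_i$ is shift-related and we only need \emph{existence} of a finite $n$, intersecting with the positive-probability event $\{\widehat{\tilde A}(1,N)=M_{\mathrm{composed}}\}$ suffices. Modulo this care, the rest is routine manipulation of block-triangular max-plus products and pattern matrices, using the identity $\widehat{AB}=\widehat A\,\widehat B$ noted just before the lemma.
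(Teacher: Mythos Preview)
Your plan for part~1 matches the paper's: the induction you describe---peeling off $A(n+1)$ and using that on $\A_i$ the row $i_n$ of $D(n+1)$ is entirely $-\infty$, forcing $i_{n+1}\in I$---is exactly how the paper argues. (Your block formula for $(\tilde A(1,n)0)^I$ drops the $C$ factors, but this is harmless: the only point needed is that every contribution reaching $J$ factors through some $B(1,k)D(k+1)$.)

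Part~2 has a genuine gap. Producing a pattern $M_{\mathrm{composed}}\in\mE$ with $(M_{\mathrm{composed}})_{ij_1}=0$ for some fixed $j_1\in J$, and then noting that $\{\widehat{A}(1,N)=M_{\mathrm{composed}}\}$ has positive probability, only yields $\P(\A_i^c)>0$. This does \emph{not} contradict $\P(\A_i)>0$: the event $\A_i$ is not shift-invariant (it is anchored at time~$1$), so no $0$--$1$ law applies, and your remark that ``$\A_i$ is shift-related'' does not rescue the argument. If instead you invoke ergodicity to realise $M_{\mathrm{composed}}$ as $\widehat{A}(m+1,m+N)$ for some random $m$ almost surely, you still cannot chain back to row~$i$ at time~$1$: part~1 only gives $(B(1,m))_{ii_m}\neq-\infty$ for some \emph{random} $i_m\in I$, whereas your pattern connects the fixed index~$i$ (not~$i_m$) to~$J$.

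The paper's remedy is to build a pattern that works from \emph{every} row of~$I$ simultaneously. Using the semigroup property of~$\mE$ and the strong connectivity of $\G(B)$, it first produces for each $i'\in I$ a pattern $M^{i'}\in\mE$ with a $0$ in position $(i',\beta)$ for a common $\beta\in J$, and then multiplies these iteratively to obtain a single $M\in\mE$ such that every row $i'\in I$ has some $j\in J$ with $M_{i'j}=0$. Ergodicity now gives, almost surely, a random $N$ with $\widehat{A}(N+1,N+p)=M$. On~$\A_i$, part~1 supplies $i_N\in I$ with $(B(1,N))_{ii_N}\neq-\infty$; since $M$ connects $i_N$---whatever it happens to be---to some $j_N\in J$, concatenation forces $(A(1,N+p))_{ij_N}\neq-\infty$, so $\omega\notin\A_i$. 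Hence $\A_i$ lies in a null set. The universality of~$M$ over all rows of~$I$ is the idea your plan is missing.
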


\begin{proof}\ 
\begin{enumerate}
\item For every $\omega\in\A_i$, we prove our result by induction on $n$.

Since the $A(n)$ almost-surely have at least one finite entry on each row, there exists an $i_1\in [1,\cdots,d]$, such that $A_{ii_1}(1)\neq -\infty$.
Since $\left(D(1)0\right)_{i}=-\infty$, every entry on row~$i$ of~$D(1)$ is $-\infty$, that is $A_{ij}(1)=-\infty$ for every $j\in J$, therefore $i_{1}\in I$ and $B_{ii_1}(1)=A_{ii_1}(1)\neq -\infty$.

Let us assume that the sequence is defined up to rank $n$.
Since $A(n+1)$ almost-surely has at least one finite entry on each row, there exists an $i_{n+1}\in [1,\cdots,d]$, such that $A_{i_ni_{n+1}}(n+1)\neq -\infty$. 

Since $\omega\in\A_i$, we have:
$$-\infty=\left(B(1,n)D(n+1)0\right)_i\ge \left(B(1,n)\right)_{ii_n}+\left(D(n+1)0\right)_{i_{n}},$$
therefore $\left(D(n+1)0\right)_{i_{n}}=-\infty$.

It means that every entry on  row $i_n$ of $D(n+1)$ is $-\infty$, that is \mbox{$A_{i_nj}(n+1)=-\infty$} for every $j\in J$, therefore $i_{n+1}\in I$ and
$$B_{i_ni_{n+1}}(n+1)=A_{i_ni_{n+1}}(n+1)\neq -\infty.$$

Finally, we have:
$$\left(B(1,n+1)\right)_{ii_{n+1}}\ge\left(B(1,n)\right)_{ii_{n}}+B_{i_ni_{n+1}}(n+1)\neq -\infty .$$

\item 
As a first step, we want to construct a matrix $M\in\mE$ such that
$$\forall i\in I, \exists j\in J, M_{ij}=0.$$

Since $\P\left(D=(-\infty)^{I\times J}\right)<1$, there are $\alpha\in I$, $\beta\in J$ and $M^0\in\mE$ with $M^0_{\alpha\beta}=0$. 
For any~$i\in I$, since $\G(B)$ is strongly connected, there is $M\in\mE$ such that $M\in\mE$ and $M_{i\alpha}=0$. Therefore $M^i=MM^0$ is in $\mE$ and satisfies $M^i_{i\beta}=0$.

Now let us assume $I=\{\alpha_1,\cdots,\alpha_m\}$ and define by induction the finite sequence of matrices $P^k$.
\begin{itemize}
 \item $P^1=M^{\alpha_1}$
 \item If there exists $j\in J$ such that $P^{k}_{\alpha_{k+1}j}=0$, then $P^{k+1}=P^k$.
Else, since the matrices have at least one finite entry on each row, there is an $i\in I$, such that $P^{k}_{\alpha_ki}$, and $P^{k+1}=P^kM^i$.
\end{itemize}
It is easily checked that such $P^k$ satisfy, $$\forall l\le k, \exists j\in J, P^k_{\alpha_lj}=0.$$
Therefore, we set $M=P^m$ and denote by $p$ the smallest integer such that $\P\left(\widehat{A}(1,p)=M\right)>0$\\

Now, it follows from the definition of $\mE$ and the ergodicity of $\sAn$ that there is almost surely an $N\in N$, such that $\widehat{A}(N+1,N+p)=M$.

On $\A_i$, that would define a random $j_N\in J$ such that $M_{i_Nj_N}=0$, where $i_N$ is defined according to the first point of the lemma. Then, we would have 
$$(A(1,N+p))_{ij_N}\ge(A(1,N))_{ii_N}+(A(N+1,N+p))_{i_Nj_N}>-\infty$$
But  $\A_i$ is defined as the event on which there is never a path from $i$ to $J$, so that we should have $\forall n\in\N,\forall j\in J, A(1,n))_{ij}=-\infty.$

Finally, $\A_i$ is included in the negligible set $\left\{\forall n\in\N, \widehat{A}(n+1,n+p)\neq M\right\}$.
\end{enumerate} 
\end{proof}

\begin{theo}\label{thHong}
If $\left(A(n)\right)_{n\in\N}$ is a sequence of i.i.d. random matrices with values in $\rma^{d\times d}$ such that the positive part of $\max_{i j}A_{ij}(0)$ is integrable, $A(0)$ almost-surely has at least one finite entry on each row and $\G(A)$ is strongly connected, then we have  $$\forall i\in [1,d], \lim_n\frac{1}{n} y_i(n,0)=\gamma(A)$$.
\end{theo}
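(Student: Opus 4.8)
The plan is to prove the two bounds $\limsup_n \tfrac1n y_i(n,0)\le\gamma(A)$ and $\liminf_n \tfrac1n y_i(n,0)\ge\gamma(A)$ separately, the first being immediate and the second being the whole content. For the upper bound, since $y_i(n,0)\le\max_j y_j(n,0)$, Theorem--Definition~\ref{defexptop} gives at once $\limsup_n\tfrac1n y_i(n,0)\le\lim_n\tfrac1n\max_j y_j(n,0)=\gamma(A)$ almost surely; in particular the statement is trivial when $\gamma(A)=-\infty$, so from now on I would set $\gamma:=\gamma(A)\in\R$ (one checks that in this case $\max_{ij}A_{ij}(0)$ is in fact integrable, by comparing $\max_j y_j(n,0)$ with $\sum_{k=1}^n\max_{pq}A_{pq}(-k)$ and using the strong law; this is convenient later). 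It then remains to prove $\liminf_n\tfrac1n y_i(n,0)\ge\gamma$; equivalently, writing $\gamma_b(A)$ for the bottom exponent of Theorem--Definition~\ref{thvincent}, it suffices to show $\gamma_b(A)\ge\gamma(A)$, since $\tfrac1n\min_j y_j(n,0)\to\gamma_b(A)$ then forces $\liminf_n\tfrac1n y_i(n,0)\ge\gamma$ for every $i$.

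To get $\gamma_b\ge\gamma$ I would use a concatenation (``splicing'') argument exploiting the independence of the $A(n)$. Fix $\varepsilon>0$, take $m$ large, then cut a long time interval into $N$ consecutive blocks of length $m$. On block number $t$, the a.s. convergence in Theorem~\ref{defexptop} applied to the corresponding shifted sequence provides, for $m$ large, a path of length $m$ of average weight at least $\gamma-\varepsilon$; let $k_t$ and $\ell_t$ be its initial and terminal nodes, read off the realization. The circularity ``a near-optimal path of length $m$ \emph{starting at a prescribed node}'' (which is essentially the statement we are proving) is sidestepped precisely by letting each block pick its own endpoints $k_t,\ell_t$. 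Between block $t$ and block $t+1$ I insert a short connecting window and, using that $\G(A)$ is strongly connected, a walk from $\ell_t$ to $k_{t+1}$ inside it. The crucial point is that the matrices in a connecting window carry time indices disjoint from those of all the blocks, hence are independent of $(k_t,\ell_t)_t$; so, conditionally on all the blocks, the connecting windows are independent and each connection succeeds — with weight bounded below — with a probability $\ge\delta>0$ that does not depend on the (conditionally fixed) endpoints. A conditional Borel--Cantelli / law-of-large-numbers argument then shows that a positive density of connections succeed, and splicing the successful pieces yields, for every $i$, a path of length of order $Nm$ starting at $i$ of weight at least $(\gamma-2\varepsilon)Nm$. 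Letting $N\to\infty$ and then $\varepsilon\to0$ gives $\gamma_b\ge\gamma$.

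One structural difficulty is that $\G(A)$ need not be aperiodic — it may be a single cycle — so ``$\ell_t$ reaches $k_{t+1}$'' only holds for connecting-walk lengths in a prescribed residue class modulo the period $\kappa$, and the behaviour of $y_i(n,0)$ cannot be controlled along all $n$ by a single pattern. I would deal with this by first proving the estimate along $n$ in one arithmetic progression — e.g. by applying the splicing argument to the i.i.d. sequence of $\kappa$-fold products $A(\kappa-1,0),A(2\kappa-1,\kappa),\dots$ restricted to one cyclic class, whose graph is primitive — and then transferring it to all residues by the elementary observation that, because every row of every $A(k)$ has a finite entry, $y_i(n+r,0)\ge A(-1,-r)_{i\,j_r}+y_{j_r}(n,0)\circ\theta^{-r}$ for a suitable $j_r$, the prepended finite scalar being $o(n)$; so $\liminf_n\tfrac1n y_i(n,0)\ge\gamma$ for all $i$ along a sequence with bounded gaps propagates to all large $n$.

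The main obstacle, where I expect the real work to be, is the control of the connecting segments in the presence of $-\infty$'s: an arc of $\G(A)$ may equal $-\infty$ with positive probability and, even when finite, may have a non-integrable negative tail. Hence the connecting walks — and the detours one is forced to take through the ``failed'' windows — must be routed through genuinely good arcs, and the cumulative cost of the exceptional windows must be shown to be absorbed into the $\varepsilon$ (this is why one wants the per-window connection probability to be uniform over the pair of endpoints, which, after the reduction to a primitive graph, comes from the existence of a common length $p$ admitting a walk between every ordered pair of nodes, and why the exceptional windows must either be arranged to form a vanishing fraction or handled by a renewal-type bookkeeping). In the case where all entries of $A(n)$ are almost surely finite this control is exactly what Hong's argument~\cite{Hong} supplies; the new point here is precisely the systematic handling of entries that are $-\infty$ with positive (but sub-unit) probability.
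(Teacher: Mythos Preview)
Your route is genuinely different from the paper's, and the obstacle you yourself single out --- the cost of the failed connecting windows --- is a real gap that the proposal does not close. When the connection between block~$t$ and block~$t+1$ fails, the spliced path must still traverse block~$t+1$ from whatever node it has reached; the weight of that traversal is of the form $\bigl(A(\text{block }t+1)\,0\bigr)_j$ for some $j$, and under the sole hypothesis $(\max_{ij}A_{ij})^+\in L^1$ this quantity has no integrable lower bound. Pushing the per-window success probability towards~$1$ does not help: a vanishing fraction of blocks with non-integrable traversal cost can still dominate the average, and truncating the connection weight at $-M$ only controls the \emph{successful} windows, not the blocks following a failure. Your ``renewal-type bookkeeping'' suggestion would need, at minimum, a lower bound on $\min_i y_i(m,0)$ for a single block --- but that is essentially the statement $\gamma_b(A)>-\infty$ of Theorem--Definition~\ref{thvincent}, which requires $A(0)0\in L^1$ and is not available here (and your explicit reduction to $\gamma_b\ge\gamma$ via that result has the same integrability gap). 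So as written the splicing argument is circular at exactly the point where the $-\infty$ entries matter.

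The paper avoids the whole issue. It first invokes Bousch--Mairesse \cite[Theorem~6.7]{BouschMairesseEng} to obtain almost-sure convergence of $\tfrac1n\,y(n,0)$ to some (a~priori random) limit~$L$, reducing the task to showing that each $L_i$ is deterministic. It then runs the finite-state Markov chain of pattern matrices $R(n)=\widehat{A}(-1,-n)$ and, using strong connectivity of $\G(A)$, places in every recurrence class a state $F$ with $F_{ii}=0$; the successive hitting times $S_n$ of this set cut the product into i.i.d.\ blocks, and the guaranteed $i\to i$ loop at time $S_1$ yields $l_i\ge l_i\circ\theta^{S_1}$ for the subsequential limit, forcing $l_i$ --- hence $L_i$ --- to be constant by ergodicity. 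No splicing, no control of negative tails, and no periodicity reduction are needed: the existence theorem absorbs the difficulty your argument has to face head-on. If you want to salvage the direct approach, the most natural move is to import Bousch--Mairesse first and then use a single a.s.-finite preamble; but at that point you are close to the paper's stopping-time argument anyway.
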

This theorem is stated by D.~Hong in the unpublished~\cite{Hong}, but the proof is rather difficult to understand and it is unclear if it holds when $A(1)$ takes infinitely many values. Building on~\cite{BouschMairesseEng}, we now give a short proof of this result.
\begin{proof}
According to~\cite[Theorem~6.7]{BouschMairesseEng}, $\left(\frac{1}{n}y(n,0)\right)_{n\in\N}$ converges a.s. We have to show that its limit is deterministic.

The sequence $R(n):=\widehat{A}(-1,-n)$ is a Markov chain with  states space is $$\left\{M\in\{0,-\infty\}^{d\times d}\left|M0=0\right.\right\}$$ and whose transitions are defined by: $$\P\left(R(n+1)=F|R(n)=E\right)=\P\left(\widehat{E  A(1)}=F\right).$$
For every $i,j\in I$, we have $R_{ij}(n)=0$ if and only if $\left(A(-1,-n)\right)_{ij}\neq-\infty$.

Let $i$ be any integer in $\{1,\cdots,d\}$ and $E$ be a recurrent state of $\left(R(n)\right)_{n\in\N}$. There exists a $j\in [1,\cdots,d]$ such that $E_{ij}=0$. Since $\G(A)$ is strongly connected, there exists a $p\in\N$, such that $\left(B(-1,-p)\right)_{ji}\neq-\infty$ with positive probability. Let $G$ be such that $\P\left(\left(B(-1,-p)\right)_{ji}\neq-\infty, \widehat{B}(-1,-p)=G\right)>0$. Now, $F=EG$ is a state of the chain, reachable from state $E$ and such that $F_{ii}=0$. Since $E$ is recurrent, so is $F$ and $E$ and $F$ belong to the same recurrence class.

Let $\mathcal{E}$ be a set with exactly one matrix $F$ in each recurrence class, such that $F_{ii}=0$.
Let $S_n$ be the $n^\mathrm{th}$ time $\left(R(m)\right)_{m\in\N}$ is in $\mathcal{E}$.

Since the Markov chain has finitely many states and $\mathcal{E}$ intersects every recurrence class, $S_n$ is almost-surely finite, and even integrable. Moreover, the $S_{n+1}-S_n$ are i.i.d. (we set $S_0=0$) and so are the $A(-S_n-1,-S_{n+1})$. Since $P(S_1>k)$ decreases exponentially fast, $A(-1,-S_1)0$ is integrable and thus the sequence $\left(\frac{1}{n}y(S_n,0)\right)_{n\in\N}$  converges a.s. Let us denote its limit by~$l$. 

Let us denote by $\mathcal{F}_0$ the $\sigma$-algebra generated by the random matrices $A(-S_n-1,-S_{n+1})$. Then $l$ is $\mathcal{F}_0$ measurable, and the independence of the $A(-S_n-1,-S_{n+1})$ means that $(\Omega,\mathcal{F}_0,\P,\theta^{S_1})$ is an ergodic measurable dynamical system. Because of the choice of $S_1$, we have $l_i\ge l_i\circ\theta^{S_1}$, so that $l_i$ is deterministic.

Now, let us notice that the limit of $\frac{1}{n}y_i(n,0)$ is that of $\frac{1}{S_n}y_i(S_n,0)$, that is $\frac{l_i}{\E(S_1)}$, which is deterministic.

This means that $\lim\frac{1}{n}y_i(n,0)$ is deterministic for any~$i$, and, according to Theorem~\ref{thCN}, it implies that it is equal to $\gamma(A)$.
\end{proof}

\section{Acknowledgements}
This article is based on my work during my PhD at Universit\'e de Rennes~1, as a JSPS postdoctoral fellow at Keio University, and as ATER at Universit\'e Paris-Dauphine. During this time, many discussions with Jean Mairesse have been a great help. This paper owes much to him. I am also grateful to the anonymous reviewer for valuable suggestions of improvements in the presentation.

\def\cprime{$'$}

\end{document}